\documentclass[12pt, reqno]{amsart}
\usepackage{amsmath,amsthm,tikz,enumerate,amssymb}
\usepackage[margin=1in]{geometry}
\usepackage[pdfpagelabels,bookmarksopen=true,hidelinks]{hyperref}
\usepackage{todonotes}
\usepackage[nameinlink]{cleveref}
\usetikzlibrary{matrix,arrows}
    \DeclareFontFamily{U}{wncy}{}
    \DeclareFontShape{U}{wncy}{m}{n}{<->wncyr10}{}
    \DeclareSymbolFont{mcy}{U}{wncy}{m}{n}
    \DeclareMathSymbol{\Sh}{\mathord}{mcy}{"58} 
\usepackage[utf8]{inputenc}
\def\C{\mathbf{C}}
\def\Q{\mathbf{Q}}
\def\G{\mathbf{G}}
\def\Z{\mathbf{Z}}
\def\N{\mathbf{N}}
\def\P{\mathbf{P}}
\def\H{\mathrm{H}}
\def\A{\mathbb{A}}
\frenchspacing
\usepackage[cal=boondoxo, scr=boondoxo]{mathalfa}
\DeclareMathOperator{\Pic}{Pic}
\DeclareMathOperator{\Br}{Br}
\DeclareMathOperator{\Spec}{Spec}
\DeclareMathOperator{\trdeg}{trdeg}

\DeclareMathOperator{\per}{per}
\DeclareMathOperator{\ch}{char}

\DeclareMathOperator{\Aut}{Aut}
\DeclareMathOperator{\Jac}{Jac}
\DeclareMathOperator{\Frac}{Frac}
\DeclareMathOperator{\Mat}{Mat}
\DeclareMathOperator{\Hom}{Hom}
\DeclareMathOperator{\HomS}{\mathcal{H\hspace{-0.3ex}o\hspace{-0.3ex}m}}%\!{\it om}}
\DeclareMathOperator{\m}{{\boldsymbol\mu}}
\numberwithin{equation}{subsection}
\newtheorem{theorem}[subsection]{Theorem}
\newtheorem{lemma}[subsection]{Lemma}
\newtheorem{corollary}[subsection]{Corollary}
\newtheorem{proposition}[subsection]{Proposition}

\theoremstyle{definition}
\newtheorem{definition}[subsection]{Definition}
\newtheorem{question}[subsection]{Question}

\newtheorem{remark}[subsection]{Remark}
\newtheorem{notation}[subsection]{Notation}
\newtheoremstyle{pgstyle} {} {} {} {} {} {.} { } {\textbf{\thmname{#1}\thmnumber{#2}}\thmnote{ (#3)}}
\theoremstyle{pgstyle}
\newtheorem{pg}[subsection]{}
\newtheoremstyle{spgstyle} {} {} {} {} {} {.} { } {\textit{\thmname{#1}\thmnumber{#2}}\thmnote{ (#3)}}
\theoremstyle{spgstyle}
\newtheorem{spg}{}[subsection]
\newcommand{\et}{\operatorname{\acute et}}
\newcommand{\mr}[1]{\mathrm{#1}}
\newcommand{\mc}[1]{\mathcal{#1}}
\newcommand{\ml}[1]{\mathsf{#1}}
\newcommand{\mb}[1]{\mathbf{#1}}
\newcommand{\mf}[1]{\mathfrak{#1}}
\newcommand{\on}[1]{\operatorname{#1}}

\newcommand{\til}{\widetilde}
\begin{document}

\title{Brauertsch fields}
\author{Daniel Krashen}
\author{Max Lieblich}
\author{Minseon Shin}
\begin{abstract} 
    We prove a local-to-global principle for Brauer classes: for any finite collection of non-trivial Brauer classes on a variety over a field of transcendence degree at least 3, there are infinitely many specializations where each class stays non-trivial. This is deduced from a Grothendieck--Lefschetz-type theorem for Brauer groups of certain smooth stacks. This also leads to the notion of a Brauertsch field.
\end{abstract} 
\date{\today}

\maketitle

\tableofcontents

\section{Introduction} \label{sec-01}

In this paper we address the following basic question.

\begin{question}
    Suppose $S$ is a variety over a field $F$ and $\alpha\in\Br(S)$ is a non-zero Brauer class. For how many closed points $s\in S$ is the specialization $\alpha|_{s}$ non-zero?
\end{question}

By analogy with Hilbertian fields, we can codify the non-triviality of specializations of Brauer classes.

\begin{definition}
    A field $F$ is \emph{Brauertsch\/}\footnote{\textbf{Brau}er-Hilb\textbf{ert}-\textbf{sch}} if for any curve $S$ over $F$ and any finite collection of Brauer classes $\alpha_1, \ldots, \alpha_m \in \Br(F(S))$ such that $\per(\alpha_i)$ is invertible in $F$ for all $i$, there are infinitely many closed points $s \in S$ such that each $\alpha_i$ is unramified at $s$ and $\per(\alpha_i|_s) = \per(\alpha_i)$.
\end{definition}

Our main theorem is then the following.

\begin{theorem} \label{main-thm}
    If $F$ has transcendence degree at least $3$ over a perfect field, then $F$ is Brauertsch.
\end{theorem}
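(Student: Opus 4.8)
The plan is to spread the curve and the classes out over a high‑dimensional base, encode the (tamely ramified) $\alpha_i$ as honest Brauer classes on a smooth proper stack, and then pass to a general hypersurface section, using a Grothendieck--Lefschetz-type injectivity theorem for Brauer groups of such stacks to ensure that periods do not drop.

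\emph{Spreading out and the stack.} After replacing $S$ by the smooth projective model of its normalization --- which alters the set of closed points only finitely often and affects neither the $\alpha_i$ nor the local notions at a closed point --- we may assume $S$ is smooth and projective over $F$. Write $F = k(B)$ with $B$ a variety over the perfect field $k$ of dimension $\trdeg_k F \ge 3$; using perfectness (generic smoothness) together with resolution or alteration, the models below may be taken smooth over $k$. Over a dense open $U \subseteq B$ there is a smooth projective relative curve $\mathcal S \to U$ with generic fiber $S$, and, since $\mathcal S$ is regular and $n_i := \per(\alpha_i)$ is invertible in $k$, each $\alpha_i \in \Br(k(\mathcal S))$ is tamely ramified along a divisor $D_i \subset \mathcal S$; shrinking $U$ or blowing up we arrange $D := \bigcup_i D_i$ to have normal crossings, and, compactifying and resolving, that $\mathcal S$ is smooth projective over $k$ of dimension $\dim B + 1 \ge 4$. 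Put $W := \mathcal S \setminus D$, so $\alpha_i \in \Br(W)$, and since $W$ is regular $\per(\alpha_i) = n_i$ already in $\Br(W)$. Now let $\mathcal X \to \mathcal S$ be a root stack along $D$, of order divisible by $n_i$ along $D_i$, chosen so that each $\alpha_i$ extends to $\widetilde\alpha_i \in \Br(\mathcal X)$ (possible because the ramification of $\alpha_i$ is tame): a smooth proper Deligne--Mumford stack over $k$, of dimension $\ge 4$, with coarse space $\mathcal S$ and with $W$ as its maximal open on which it is a scheme. Since the Brauer group of a regular DM stack injects into that of any dense open substack, $\widetilde\alpha_i|_W = \alpha_i$ forces $\per(\widetilde\alpha_i) = \per(\alpha_i) = n_i$. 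Thus the data $(S; \alpha_1,\dots,\alpha_m)$ has been encoded by $(\mathcal X; \widetilde\alpha_1,\dots,\widetilde\alpha_m)$ with $\mathcal X$ smooth, proper over $k$, of dimension $\ge 4$, and each $\widetilde\alpha_i$ of exact period $n_i$.

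\emph{Cutting down.} Fix a projective embedding of the coarse space $\mathcal S$. For a general hypersurface section $H$ of large degree (using Poonen's Bertini theorem when $k$ is finite), let $\mathcal Z \subset \mathcal X$ be the induced root substack, with coarse space $Z = H \cap \mathcal S$. Then $Z$ is smooth, integral, not contained in $D$, and --- being a hypersurface section meeting every fiber of $\mathcal S \to U$ in a nonempty finite scheme --- dominates $B$ with $Z \to B$ generically finite; hence $\eta_Z$ is a closed point $s \in S$ with $F(s) = k(Z)$ finite over $F$, at which every $\alpha_i$ is unramified. The engine is the Grothendieck--Lefschetz-type theorem for Brauer groups of stacks (the technical heart of this paper): for a smooth proper DM stack of dimension $\ge 4$ and a general hypersurface section of its projective coarse space, restriction $\Br(\mathcal X) \to \Br(\mathcal Z)$ is injective. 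This is exactly where $\trdeg_k F \ge 3$ enters: it makes $\dim \mathcal Z \ge 3$, so that $\Pic$ transfers isomorphically and injectivity on $\Br$ follows through the Kummer sequences. Granting this, since $\widetilde\alpha_i$ has exact period $n_i$ we have $(n_i/\ell)\widetilde\alpha_i \ne 0$, hence $(n_i/\ell)(\widetilde\alpha_i|_{\mathcal Z}) \ne 0$, for each prime $\ell \mid n_i$, so $\per(\widetilde\alpha_i|_{\mathcal Z}) = n_i$. Restricting once more along the dense open inclusions $Z \cap W \hookrightarrow \mathcal Z$ and $\Spec k(Z) \hookrightarrow Z \cap W$ --- each injective on Brauer groups, by the same regularity fact --- and identifying $\widetilde\alpha_i|_{Z \cap W}$ with $\alpha_i|_{Z \cap W}$, we conclude $\per(\alpha_i|_s) = n_i = \per(\alpha_i)$.

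\emph{Infinitely many points, and the obstacle.} As $H$ ranges over general hypersurface sections of unbounded degree, the divisors $Z$, hence the points $s$, have unbounded residue degree over $F$, so infinitely many of them are distinct, and a general member in each large degree satisfies all the conditions above; this yields infinitely many closed points $s \in S$ at which every $\alpha_i$ is unramified with $\per(\alpha_i|_s) = \per(\alpha_i)$, i.e.\ $F$ is Brauertsch. The main obstacle is the Grothendieck--Lefschetz theorem itself: transporting the SGA~2 circle of ideas --- weak Lefschetz for étale cohomology, the Lefschetz theorem for $\pi_1$, and the control of $\Pic$ under passage to an ample divisor --- to smooth proper Deligne--Mumford stacks, in particular to root stacks, and isolating the correct ampleness and transversality conditions for a hypersurface section pulled back from the coarse space. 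Secondary technical points, to be handled along the way, are the construction of sufficiently nice models over $k$ in positive characteristic (via perfectness of $k$ together with resolution in small dimension or alterations, or stacky models of families of curves) and the root-stack encoding of a tamely ramified Brauer class as an honest class on a smooth proper stack --- where, as noted above, the period is automatically preserved once the class extends at all.
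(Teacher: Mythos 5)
Your proposal is correct and follows essentially the same route as the paper: spread out to a smooth projective model over $k$ of dimension $\ge 4$, pass to a root stack along the (SNC-ified, via prime-to-$\ell$ alterations) ramification divisor to make the classes unramified, apply a Grothendieck--Lefschetz injectivity theorem for Brauer groups of such stacks along smooth ample divisors, and observe that the generic points of those divisors are closed points of the original curve with the right residue fields. The one ingredient you defer --- the Grothendieck--Lefschetz theorem for tame Deligne--Mumford stacks --- is exactly the technical heart of the paper (its Proposition on formal functions, algebraization of twisted line bundles, and injectivity on $\ell$-torsion of $\H^2(-,\G_m)$), so your identification of the main obstacle is accurate.
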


\subsection*{Background and existing literature}

Recall the following definition. 
\begin{definition} \cite[VIII]{LANG-DG1962}
    A field $F$ is \emph{Hilbertian} if, for any irreducible polynomial $f(t,x) \in F(t)[x]$, there exist infinitely many constants $a \in F$ such that the specialization $f(a,x) \in F[x]$ is defined and is irreducible in $F[x]$.    
\end{definition}
 The class of Hilbertian fields contains all global fields and the function field $k(t)$ over any field $k$, and every finite extension of a Hilbertian field is Hilbertian (for a survey, see \cite{FRIED-JARDEN-FIELDARITHMETIC} and \cite{SERRE-TIGT2008}). Moreover, the Hilbertian property of a field $F$ has a geometric avatar, namely $F$ is Hilbertian if for any generically finite covering $T \to S$ of curves over $F$, there are infinitely many points $s\in S$ whose preimage in $T$ is a singleton (choose a generically finite map $S \to \P_{F}^1$ and apply the Hilbertian property to the composition $T \to S \to \P_{F}^{1}$).

In \cite{FEINSCHACHERSALTMAN-BHF1992}, Fein, Saltman, Schacher introduced a class of fields satisfying an analogue of the Hilbertian property for the Brauer group. 
\begin{definition}[Fein--Saltman--Schacher]
    A field $F$ is \emph{Brauer-Hilbertian} if, for any positive integer $n$ which is invertible in $F$ and any $n$-torsion Brauer class $\alpha \in \Br(F(t))$, there exist infinitely many constants $a \in F$ such that the specialization $\alpha|_{t=a} \in \Br(F)$ is defined and has period equal to that of $\alpha$, i.e. $\per(\alpha|_{t=a}) = \per(\alpha)$.    
\end{definition}
 By \cite[2.5, 2.6]{FEINSCHACHERSALTMAN-BHF1992}, if $F$ is either a global field or a finitely generated extension of a Hilbertian field $k$ of transcendence degree $\trdeg_{k}(F) \ge 1$, then $F$ is Brauer-Hilbertian (thus, if $S$ is a $d$-fold over $\C$ with function field $F$, then $F$ is Brauer-Hilbertian if and only if $d \ge 2$). 

More generally, one can consider specializations of Brauer classes on an arbitrary extension $K$ of $F$ by choosing a valuation $\ml{v}$ on $K$ such that the specialization of $\alpha$ to the residue field $\kappa_{\ml{v}}$ is defined. In \cite{FEINSCHACHER-SOBCOAFF2000}, Fein and Schacher proved that, if $K$ is a finite extension of $F(t)$ and $\alpha \in \Br(K)$ is a Brauer class in the image of $\Br(F(t)) \to \Br(K)$, then (under certain additional hypotheses on $\alpha$ and $F$) there exist infinitely many $a \in F$ such that the $a$-adic valuation $\ml{v}_{a}$ on $F(t)$ has an extension $\ml{w}$ to $K$ such that the specialization $\alpha|_{\kappa_{\ml{w}}} \in \Br(\kappa_{\ml{w}})$ is defined and has period equal to that of $\alpha$.

\subsection*{Our results}
In this paper we consider a geometric approach to the Brauer-Hilbertian property. It is worth noting that the original results of \cite{FEINSCHACHERSALTMAN-BHF1992} and \cite{FEINSCHACHER-SOBCOAFF2000} cannot be directly obtained from ours, nor ours from theirs. 

\begin{notation}
    For a scheme $S$ and a subgroup $G \subseteq \Br(S)$, let \[ S[G^{-1}] := \{s \in S \;:\; G \to \Br(S) \to \Br(\kappa(s)) \text{ is injective}\} \] be the \emph{nonvanishing locus} of $G$ (viewed as a subset of the underlying topological space of $S$); for a Brauer class $\alpha \in \Br(S)$, we denote $S[\alpha^{-1}] := S[\langle \alpha \rangle^{-1}]$.
\end{notation}
 We are interested in the conditions under which $S[G^{-1}]$ contains specializations of its points. 

\begin{question} \label{0001} If $s \in S[G^{-1}]$ and $s$ is not a closed point, does there exist $s' \in \overline{\{s\}}$ such that $s' \ne s$ and $s' \in S[G^{-1}]$? \end{question}

If $S$ is a surface over $\C$, the answer is ``no''  since the residue field of every non-generic point in a surface is either a $C_{1}$-field or algebraically closed. If $S = \overline{\{s\}}$ and $S$ is a $d$-fold over $\C$ for $d \ge 3$, the theorem of Fein, Saltman, Schacher implies ``yes'' to \Cref{0001} as long as $S$ is birational to $\P_{\C}^{1} \times_{\C} X$ for some $(d-1)$-fold $X$ over $\C$.

We prove the following theorem, which implies \Cref{main-thm} and in particular that \Cref{0001} has an affirmative answer if $S$ is of finite type over $\C$ and $\dim \overline{\{s\}} \ge 4$. 

\begin{notation} For a scheme $S$ and any nonnegative integer $d \in \Z_{\ge 0}$, we denote the set of points of dimension $d$ by $S_{(d)}$ and the set of points of codimension $d$ by $S^{(d)}$. If $S$ is an irreducible scheme, we denote its generic point by $\eta_{S}$. 
\end{notation}

\begin{theorem} \label{0004} Let $k$ be a perfect field, let $F/k$ be a finitely generated extension of transcendence degree $\trdeg_{k}(F) \ge 3$, let $S$ be a finite type $F$-scheme of $\dim S \ge 1$, let $G \subseteq \Br(S)$ be a finite subgroup such that $|G|$ is invertible in $k$. For any $d \ge 1$ and any point $s \in S[G^{-1}] \cap S_{(d)}$, the set $S[G^{-1}] \cap S_{(d-1)} \cap \overline{\{s\}}$ is infinite. \end{theorem}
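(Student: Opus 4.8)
The plan is to reduce the general statement to a Grothendieck–Lefschetz-type assertion about the Brauer group of a suitable smooth stack, which the paper advertises as the main technical engine. First I would reduce to the case where $S$ is integral with generic point $s$ and $d = \dim S$: indeed it suffices to produce one codimension-one point $s'$ of $\overline{\{s\}}$ lying in $S[G^{-1}]$ together with a "going down" argument showing that once we have infinitely many such $s'$ for every $s$, an induction on dimension applies; but actually the cleanest route is to show directly that the locus of good hyperplane-type sections is large. So replace $S$ by $\overline{\{s\}}$, spread out so that $G$ extends to a finite subgroup of $\Br$ of a smooth model, and then project: choosing a dominant quasi-finite map (after shrinking) to an affine space over the base, we want a Bertini-type statement saying that slicing by a general member of a very ample linear system keeps all the classes in $G$ nontrivial on the generic point of the slice.

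Next I would set up the stack-theoretic incarnation. For each nonzero $\alpha \in G$, nontriviality of $\alpha$ at a point $t$ is equivalent to the corresponding $\mu_n$-gerbe (or $\mathrm{PGL}$-torsor, with $n = \per(\alpha)$) failing to be neutral over $\kappa(t)$; equivalently, letting $\mc{X} \to S$ be the $\m_n$-gerbe representing $\alpha$ (or a product of such gerbes over all $\alpha \in G$), the class $\alpha|_t$ is nonzero iff $t$ is not in the image of the coarse-space map from the "trivializing locus." The key input is a Lefschetz theorem of the form: if $\mc{X}$ is a smooth stack of the appropriate kind over a field with "enough" transcendence degree, and $\mc{Y} \subseteq \mc{X}$ is obtained by a sufficiently positive hyperplane section, then $\Br(\mc{X}) \to \Br(\mc{Y})$ is injective. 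Applying this iteratively to cut down from $\dim S$ to a curve, and then invoking the hypothesis that the residue fields encountered are "Brauertsch enough" (which is exactly where $\trdeg_k(F) \ge 3$ enters, guaranteeing the relevant residue fields along the way have transcendence degree $\ge 1$ over something Brauer-Hilbertian, or more precisely that the Lefschetz theorem's hypotheses are met), gives nontriviality on a dense open of each successive slice. Infinitude then comes from the fact that a dense constructible subset of a positive-dimensional variety over an infinite field (finitely generated fields of transcendence degree $\ge 1$ are infinite) has infinitely many closed points, combined with the observation that for $d \ge 2$ we can vary the slice in a family.

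For the base case $d = 1$, $S$ is (birational to) a curve over $F$ and $G \subseteq \Br(F(S))$; here the statement is precisely that $F$ is Brauertsch, so this is where \Cref{main-thm} is used — but since \Cref{0004} is supposed to \emph{imply} \Cref{main-thm}, the logic must instead run the other way: the curve case is handled directly by the same Lefschetz/gerbe machinery applied to a smooth proper model, using that $F$ has transcendence degree $\ge 3$ so that $F(S)$ has transcendence degree $\ge 4$ and the relevant two-dimensional stack over $F$ has its Brauer group controlled by purity and the injectivity of $\Br(\mc{X}) \to \Br(\mc{X}_{\bar\eta})$-type statements. Concretely, I would: (i) pass to a smooth projective model $C$ of $S$ over $F$ and a smooth projective model $X$ with $F(X)$ containing all the data, (ii) realize the nonvanishing locus as the complement of the branch/trivializing loci of finitely many gerbes, (iii) show this complement is dense (nonempty open) using the Lefschetz-type theorem, and (iv) count closed points.

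The main obstacle I expect is establishing the Grothendieck–Lefschetz injectivity $\Br(\mc{X}) \hookrightarrow \Br(\mc{Y})$ for the stacks in question with genuinely sharp hypotheses on the transcendence degree — in particular, controlling what happens at the (possibly stacky) singularities of the gerbes and over imperfect or small residue fields, and making the "sufficiently positive section" uniform enough to be iterated. A secondary difficulty is the bookkeeping needed to ensure that at each stage of the iterated slicing the ambient field retains enough transcendence degree for the next application of the theorem, i.e. that the threshold "$\ge 3$" is exactly what survives $d-1$ slices down to a curve; this is presumably why the hypothesis is $\trdeg \ge 3$ rather than $\ge 2$, the extra degree being consumed by the passage from a class on a variety to a class on the generic fiber of its gerbe.
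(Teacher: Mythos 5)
There is a genuine gap in the way you propose to extract \emph{infinitely many} good points. In steps (iii)--(iv) you assert that the nonvanishing locus is a dense (constructible/open) subset of each slice and then count closed points of a dense open over an infinite field. But $S[G^{-1}]$ is in general neither open nor does it contain a dense open: the paper's \Cref{0002} gives the example $S=\P^1_k\setminus\{0,\infty\}$, $\alpha=(a,t)$, where $\alpha$ vanishes at the Zariski-dense set of squares $b\in(k^\times)^2$, so the complement of $S[\alpha^{-1}]$ is dense. Any argument that produces good closed points by ``density of the good locus'' therefore fails, and this is precisely the difficulty the theorem is designed to overcome. The paper's mechanism is different: after reducing to a curve $S$ over $F$, it spreads $S$ and $G$ out to a smooth model $U\subseteq X$ with $X$ projective over the \emph{perfect base} $k$ of dimension $\trdeg_k(F)+1\ge 4$, extends $G$ to the Brauer group of a root stack $\mc{X}=\sqrt[\ell]{(X,D_1)}\times_X\cdots\times_X\sqrt[\ell]{(X,D_r)}$ via purity, and then shows (\Cref{0005}(4)) that $\Br(\mc{X})\to\Br(\mc{X}|_Y)$ is injective for every smooth ample divisor $Y$ in a fixed high multiple of a polarization. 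Bertini supplies infinitely many such $Y$, and \Cref{0006} guarantees that all but finitely many of their generic points $\eta_Y$ are images of closed points of the compactified curve; regularity of $\mc{X}|_Y$ then gives injectivity of $G\to\Br(\kappa(\eta_Y))$. So the infinitude comes from varying the ample divisor on the model over $k$, not from density of the good locus on $S$.

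Two smaller points. First, your reduction from general $d$ to $d=1$ by iterated hyperplane slicing of $S$ over $F$ is not what is needed: the theorem asks only to drop the dimension by one, and the paper does this in a single step by choosing a transcendence basis $x_1,\dotsc,x_d$ of $\kappa(\eta_S)/F$, projecting to $\A^d_F$, and base changing along $\A^1_{F'}\to\A^d_F$ with $F'=F(t_1,\dotsc,t_{d-1})$; closed points of the resulting curve $S'$ land in $S_{(d-1)}\cap\overline{\{s\}}$, and $\trdeg_k(F')=\trdeg_k(F)+d-1\ge 3$ is preserved, so no induction or repeated Lefschetz application is required. Second, your account of where $\trdeg_k(F)\ge 3$ enters (``residue fields Brauertsch enough,'' ``consumed by passage to the gerbe'') is not the actual reason: the hypothesis is exactly what makes the model $X$ over $k$ have $\dim X\ge 4$, which is the threshold needed for the cohomological vanishing $\H^i(X,I^n)=0$, $0\le i\le 3$, underlying the Grothendieck--Lefschetz injectivity on $\H^2(-,\G_m)[\ell]$ in \Cref{0005}. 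You correctly identify the Lefschetz injectivity $\Br(\mc{X})\hookrightarrow\Br(\mc{X}|_Y)$ as the technical engine, but as written the proposal does not contain a viable route from that input to the infinitude asserted in the theorem.
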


\begin{corollary} Let $F$ be as in \Cref{0004}, and suppose $S$ is a smooth curve over $F$. If $\alpha \in \Br(S)$ is a nontrivial Brauer class, there exist infinitely many closed points $s \in S$ such that the period of $\alpha|_{s}\in\Br(\kappa(s))$ equals the period of $\alpha$. That is, the restriction map $\Br(S)\to\Br(\kappa(s))$ is injective on the subgroup generated by $\alpha$.
\end{corollary}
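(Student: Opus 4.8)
The plan is to deduce the Corollary as a direct application of \Cref{0004} with $d=1$ and $G=\langle\alpha\rangle$. First I would reduce to the case that $S$ is integral: since $S$ is smooth over $F$ it is regular, so its irreducible components are disjoint, and $\alpha$ nontrivial means its restriction to at least one component $S_{0}$ is nontrivial; replacing $S$ by $S_{0}$ we may assume $S$ is integral with generic point $\eta_{S}$ and function field $\kappa(\eta_{S})=F(S)$.

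Next I would verify the hypotheses of \Cref{0004}. Because $S$ is regular and integral, the restriction map $\Br(S)\to\Br(F(S))$ is injective; as $\Br(F(S))$ is torsion, $\alpha$ has finite period $n=\per(\alpha)$, and (matching the period condition in the definition of a Brauertsch field) we take $n$ to be invertible in $F$, hence in $k$. So $G=\langle\alpha\rangle$ is a finite subgroup of $\Br(S)$ with $|G|$ invertible in $k$. The same injectivity shows $\langle\alpha\rangle\to\Br(\kappa(\eta_{S}))$ is injective, i.e. $\eta_{S}\in S[G^{-1}]$; since $\dim\overline{\{\eta_{S}\}}=1$ we have $\eta_{S}\in S[G^{-1}]\cap S_{(1)}$, and $\dim S=1\ge 1$ as required.

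Now apply \Cref{0004} to this $S$, $G$, $d=1$, and $s=\eta_{S}$: the set $S[G^{-1}]\cap S_{(0)}\cap\overline{\{\eta_{S}\}}$ is infinite. Since $\overline{\{\eta_{S}\}}=S$ and the dimension-zero points of the curve $S$ are exactly its closed points, there are infinitely many closed points $s\in S$ with $\langle\alpha\rangle\hookrightarrow\Br(\kappa(s))$. It remains to unwind this: for every closed point $s$, the class $\alpha|_{s}\in\Br(\kappa(s))$ is defined because $\alpha$ is a class on all of the regular scheme $S$ (hence unramified everywhere), and $s\in S[G^{-1}]$ says precisely that $\alpha|_{s}$ has order equal to $\per(\alpha)$ and that $\Br(S)\to\Br(\kappa(s))$ is injective on $\langle\alpha\rangle$. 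This is the assertion of the Corollary.

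There is no real obstacle here beyond bookkeeping: the substance is entirely in \Cref{0004}, and the work in the proof of the Corollary is the reduction to an integral regular $S$, the observation that regularity forces $\alpha$ to be torsion so that $\langle\alpha\rangle$ is a legitimate choice of finite subgroup $G$ with $|G|$ invertible in $k$, and the translation between the nonvanishing-locus condition $s\in S[G^{-1}]$ and the statement $\per(\alpha|_{s})=\per(\alpha)$.
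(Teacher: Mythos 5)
Your proof is correct and is precisely the intended deduction: the paper states this corollary without proof as an immediate consequence of \Cref{0004} applied with $d=1$, $G=\langle\alpha\rangle$, and $s=\eta_{S}$, after the same reduction to an integral component and the observation that regularity makes $\Br(S)\to\Br(\kappa(\eta_{S}))$ injective. You also rightly flag that the statement implicitly assumes $\per(\alpha)$ is invertible in $F$ (as in the Brauertsch definition), which is needed to meet the hypothesis of \Cref{0004} that $|G|$ be invertible in $k$.
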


To prove \Cref{0004}, we construct a model $X$ of $S$ which is smooth and projective over $k$ and over which $\alpha$ is defined (away from its ramification divisor $D \subset X$). Using that $\alpha$ becomes unramified over the root stack $\mc{X} = \sqrt[\ell]{(X,D)}$ associated to $D$ where $\ell = \per\alpha$, we reduce to the task of lifting an $\alpha$-twisted line bundle from a smooth ample divisor of $\mc{X}$ to one over $\mc{X}$ itself. For this, we prove a Grothendieck--Lefschetz theorem for the Picard group and Brauer group of Deligne-Mumford stacks (see \Cref{0005}). In \Cref{sec-03}, we investigate the problem of constructing Brauer classes $\alpha \in \Br(S)$ whose nonvanishing locus $S[\alpha^{-1}]$ avoids a prescribed set of points $T \subseteq S$, showing that this is always possible if $T$ is a singleton (\Cref{0012}) or if we allow a localization (\Cref{0011}).

\subsection*{Local-to-global consequences}
As an application of our results, in \Cref{sec-04} we prove two kinds of local-to-global principles. In general, given a field $L$ and a collection of overfields $\Omega$ of $L$, we define \[ \Sh_{\Omega}\Br(L) := \ker\left[ \Br(L) \to \prod_{L' \in \Omega} \Br(L') \right] \] and we say that \emph{local-to-global holds with respect to $\Omega$} if $\Sh_{\Omega}\Br(L) = 0$. In practice, one is particularly interested in the case where $\Omega$ consists of a collection of completions of $L$ with respect to a geometric collection of discrete valuations. To this effect, if we are given an integral normal $F$-scheme $S$, we set $\Omega_S$ to be the set of completions of the function field $\kappa(\eta_{S})$ with respect to discrete valuations corresponding to prime divisors of $S$.

\begin{corollary} \label{0022} Let $F$ be as in \Cref{0004}, let $S$ be a smooth $F$-scheme of $\dim S \ge 1$, and let $K := \kappa(\eta_{S})$ be the function field of $S$. Then local-to-global holds with respect to $\Omega_S$, i.e. the natural map
\[ \Br(K) \to \prod_{s \in S^{(1)}} \Br(K_{s}) \] is injective, where we denote $K_{s} := \Frac(\mc{O}_{S,s}^{\wedge})$ for all $s \in S^{(1)}$. \end{corollary}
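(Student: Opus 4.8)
The plan is to deduce \Cref{0022} from \Cref{0004} by a spreading-out argument that uses only the \emph{nonemptiness} of the nonvanishing locus in one lower dimension, together with two standard facts about complete discrete valuation rings. Let $\alpha \in \Br(K)$ lie in the kernel of the displayed map; the claim is that $\alpha = 0$, so assume not. Since $S$ is smooth over $F$ it is regular and integral, so for each $s \in S^{(1)}$ the ring $\mathcal{O}_{S,s}$ is a DVR with residue field $\kappa(s)$ and $K_s = \Frac(\mathcal{O}_{S,s}^{\wedge})$; it suffices to produce a single $s \in S^{(1)}$ with $\alpha|_{K_s} \neq 0$, which already contradicts the hypothesis.

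After decomposing $\alpha$ into primary components we may assume $\per\alpha$ is a prime power, and I would moreover assume this prime is invertible in $k$; the complementary $p$-primary case ($p = \ch k$) is not reached by \Cref{0004} and would need separate input, which I regard as the main caveat of this approach. Now, since the Brauer group commutes with filtered colimits of rings, $\alpha$ extends to a class $\beta \in \Br(U)$ over some dense open $U \subseteq S$, and $\Br(U) \hookrightarrow \Br(K)$ because $U$ is regular and integral, so $\beta \neq 0$. Put $G := \langle\beta\rangle \subseteq \Br(U)$; its order is $\per\alpha$, hence invertible in $k$. As $G \hookrightarrow \Br(K)$, the generic point $\eta_U = \eta_S$ lies in $U[G^{-1}] \cap U_{(d)}$ with $d = \dim U = \dim S \geq 1$, so \Cref{0004} (applied to $U$, $G$, and the point $\eta_U$) tells us that $U[G^{-1}] \cap U_{(d-1)}$ is infinite; pick a point $s$ in it. Since $U$ is dense open in the integral finite-type $F$-scheme $S$, the point $s$ still has dimension $d - 1$ in $S$, hence codimension $1$ by the dimension formula for such schemes, so $s \in S^{(1)}$ and $\mathcal{O}_{S,s} = \mathcal{O}_{U,s}$.

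The last step is to pass from ``$\beta$ is nonzero on $\kappa(s)$'' — which is exactly what $s \in U[G^{-1}]$ gives, since $\beta$ generates $G \neq 0$ — to ``$\alpha$ is nonzero on $K_s$''. Here I would use that for a complete DVR $R$ with residue field $\kappa$ the reduction map $\Br(R) \to \Br(\kappa)$ is an isomorphism (Brauer group of a Henselian local ring) and the restriction $\Br(R) \to \Br(\Frac R)$ is injective (the beginning of Witt's residue sequence, at least on the prime-to-$\ch\kappa$ part, which contains $\beta|_R$). Applying this to $R = \mathcal{O}_{S,s}^{\wedge}$: the class $\beta|_R$ reduces to $\beta|_{\kappa(s)} \neq 0$, hence is nonzero, hence has nonzero image $\alpha|_{K_s}$ in $\Br(K_s)$, contradicting $\alpha \in \Sh_{\Omega_S}\Br(K)$. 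Therefore $\alpha = 0$.

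I expect all of the real content to sit in the invocation of \Cref{0004}; the rest is routine, with the two complete-DVR facts and the dimension comparison between $U$ and $S$ being the only points needing a word of justification, and the restriction to the prime-to-$\ch k$ part being the one genuine limitation.
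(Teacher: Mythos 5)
Your proof is correct and follows essentially the same route as the paper: spread $\alpha$ out to a class on a dense open $U \subseteq S$, use injectivity of $\Br(\mathcal{O}_{S,s}^{\wedge}) \to \Br(K_s)$ (regularity) together with the identification of $\Br(\mathcal{O}_{S,s}^{\wedge})$ with $\Br(\kappa(s))$ (henselianness) to translate (non)vanishing over $K_s$ into (non)vanishing over $\kappa(s)$, and invoke \Cref{0004} at the generic point of $U$ — the paper merely runs this in the contrapositive direction rather than by contradiction. The caveat you flag about the $p$-primary part in characteristic $p$ is equally present in the paper's own proof, which likewise rests entirely on \Cref{0004} and its invertibility hypothesis.
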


We note that \Cref{0022} is a particularly strong version of the local-to-global principle, where such results often require that $\alpha$ vanishes at the completions of $K$ at \emph{all} codimension $1$ points on a proper model (e.g. the case of $\mathbb{P}^{1}$ over a $C_{1}$-field, or the case of a semiglobal field, where one also needs all completions on a proper model over the valuation ring).

The second application is a local-to-global principle for genus $1$ curves over function fields of fourfolds, which we prove in \Cref{sec-04}.

\begin{corollary} \label{0015} Let $F,S,K$ be as in \Cref{0022}, and let $C \to \Spec K$ be a genus 1 curve. Suppose that for all codimension 1 points $s \in S^{(1)}$, the base change $C \times_{\Spec K} \Spec K_{s}$ admits a $K_{s}$-point. Then $C$ admits a $K$-point. \end{corollary}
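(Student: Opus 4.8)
The plan is to deduce \Cref{0015} from \Cref{0022} by a standard argument relating rational points on genus $1$ curves to the vanishing of a Brauer class. Let me sketch this.

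\textbf{Setup via the period-index obstruction.} First I would recall that a genus $1$ curve $C \to \Spec K$ is a torsor under its Jacobian $E = \Jac(C)$, hence determines a class $[C] \in \H^1(K, E)$. The connection to the Brauer group comes through the period of $C$: if $[C]$ has order $n$ in $\H^1(K,E)$ (the period of $C$), one obtains, by the theory of the Ogg--Shafarevich/Lichtenbaum--Tate pairing or more concretely via the embedding $C \hookrightarrow \P^{n-1}_K$ as a curve of degree $n$ together with the associated Brauer--Severi-type construction, an element $\alpha_C \in \Br(K)$ of period dividing $n$ with the key property that $\alpha_C = 0$ if and only if $C$ has a $K$-point (more precisely, $\alpha_C$ vanishes iff $C$ has a zero-cycle of degree $1$, and for genus $1$ over suitable fields this is equivalent to having a rational point — one should be slightly careful here and may want to invoke that period equals index in the relevant dimension, or simply track the class in $\Br(K)[n]$ whose triviality detects the $K$-point). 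I would state this as a lemma, citing Lichtenbaum or the standard references on genus $1$ curves and the Tate--Lichtenbaum duality.

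\textbf{Local hypothesis gives local triviality of the Brauer class.} Next, for each $s \in S^{(1)}$, the hypothesis that $C \times_K \Spec K_s$ has a $K_s$-point means the base-changed curve $C_{K_s}$ is trivial as a torsor, hence $\alpha_C|_{K_s} = 0$ in $\Br(K_s)$. Thus $\alpha_C \in \Sh_{\Omega_S}\Br(K) = \ker[\Br(K) \to \prod_{s \in S^{(1)}} \Br(K_s)]$. One technical point to address: I need $\alpha_C$ to be honestly defined over $K$ and its period to be invertible in $F$ (equivalently in $k$) so that \Cref{0022} applies. The period of $C$ need not a priori be prime to the characteristic, so I would either add this as a hypothesis, or note that \Cref{0022} as stated places no invertibility restriction (it is phrased for all of $\Br(K)$), in which case this worry evaporates and I can proceed directly.

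\textbf{Conclusion.} By \Cref{0022}, $\Sh_{\Omega_S}\Br(K) = 0$, so $\alpha_C = 0$, and therefore by the lemma $C$ has a $K$-point. The main obstacle in writing this cleanly is the first step: pinning down precisely which Brauer class to attach to $C$ and verifying that its vanishing is genuinely equivalent to (not merely implied by or implying) the existence of a $K$-rational point, rather than merely a zero-cycle of degree $1$ or a point over an extension of degree equal to the index. For genus $1$ curves the cleanest route is to take the class in $\Br(K)$ corresponding to $[C] \in \H^1(K, E)$ under the connecting map arising from the Kummer sequence for $E$ composed with the cup product against $[C]$ itself, or equivalently to use that for a genus $1$ curve the period-index obstruction class in $\Br(K)[\per C]$ vanishes exactly when $C(K) \ne \emptyset$; I would cite Lichtenbaum's results to legitimize this equivalence and otherwise the argument is immediate.
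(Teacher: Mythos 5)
There is a genuine gap, and it sits exactly where you yourself flagged ``the main obstacle'': there is in general no class $\alpha_C \in \Br(K)$ whose vanishing is equivalent to $C(K) \ne \emptyset$, so the proposed reduction of \Cref{0015} to \Cref{0022} does not go through. The period--index obstruction (Lichtenbaum's class, or the cup-product/theta-group construction you describe) detects whether a rational divisor \emph{class} of degree $n = \per(C)$ contains a rational divisor, i.e.\ whether the index of $C$ divides its period. Its vanishing therefore yields at best a zero-cycle of degree $\per(C)$, not one of degree $1$; and while for genus $1$ curves a zero-cycle of degree $1$ does produce a rational point by Riemann--Roch, no element of $\Br(K)$ certifies index $1$ unless one already knows $\per(C)=1$, which is essentially the statement being proved. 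Citing Lichtenbaum does not repair this: his results give ``obstruction vanishes iff index divides period,'' not ``iff $C(K)\neq\emptyset$.''

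The correct home for the class attached to $[C] \in \H^1(K,E)$ is $\Br(E)$ --- the Brauer group of the Jacobian, a curve over $K$, not of $\Spec K$ --- via the Ogg--Shafarevich-type isomorphism $\H^1(K,E)_{\mr{tors}} \simeq \ker(\Br(E) \to \Br(K))$, which is \Cref{0014}. Since $K(E)$ has transcendence degree $\dim S + 1$ over $F$ while the local hypotheses are only given at codimension $1$ points of $S$, \Cref{0022} is not strong enough, and the paper instead runs a two-step specialization using the full \Cref{0004}: first, each closed point $x$ of the generic fiber $E_K$ is spread out to a multisection $\til S \to S$ of the relative Jacobian $\mc{E}\to S$; the hypothesis at the images in $S^{(1)}$ of the codimension $1$ points of $\til S$ (together with regularity and henselianness of $\mc{O}_{S,s}^{\wedge}$) forces $\alpha$ to vanish at all of those points, and \Cref{0004} applied to $\til S$ then gives $\alpha|_x = 0$. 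Second, \Cref{0004} applied to the curve $E_K$ over $K$ upgrades vanishing at all closed points to vanishing at the generic point, whence $[C]=0$ by \Cref{0014} and $C(K)\neq\emptyset$. This essential use of the higher-dimensional specialization theorem on the total space of the Jacobian, rather than the function-field injectivity of \Cref{0022}, is what your proposal is missing.
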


\subsection*{Open questions}

It is tempting to ask the following question, to which we do not at the moment know the answer.

\begin{question}\label{ques:more hilb}
    Given $n>1$, we say that a field $F$ is \emph{$n$-Brauertsch\/} if for any curve $S$ over $F$ and any finite collection $\alpha_1,\dotsc,\alpha_m\in\H^n(S,\G_{m})$ of classes with order invertible in $F$, there are infinitely many closed points $s\in S$ such that the specialization $\alpha_i|_s \in \H^{n}(\Spec \kappa(s),\G_{m})$ has the same order as $\alpha_i$ for all $i$. We say that $F$ is \emph{$n$-Hilbertian\/} if the above condition is only assumed to hold for an open subset $U\subset\P^1_{F}$, one cohomology class, and closed points $s\in U$ with residue field $F$.
    \begin{enumerate}
        \item Is a field of transcendence degree at least $n+1$ over a perfect field $n$-Hilbertian (resp. $n$-Brauertsch)?
        \item More generally, which fields are $n$-Hilbertian (resp. $n$-Brauertsch)?
        \item How are the $n$-Hilbertian and $n$-Brauertsch conditions related?
    \end{enumerate}
\end{question}

\begin{remark}
    If we let $n=1$ in Question \ref{ques:more hilb}, the coefficient sheaf $\G_m$ is not the right choice (by Hilbert's Theorem 90), and one must take finite coefficients (for example, $\m_t$) for the property to naturally compare with the classical Hilbertian property. One could then ask whether the notion of Hilbertian is equivalent to being $1$-Hilbertian (resp. $1$-Brauertsch) in this sense.

    Starting with $n=2$, the inclusion $\Q/\Z\subset\G_m$ induces an isomorphism on cohomology. Moreover, the map $\Z/t\Z=\frac{1}{t}\Z/\Z\to\Q/\Z$ induces a surjection on $t$-torsion cohomology. Thus, there is no distinction (for the purposes of the question) between cohomology of order $t$ with coefficients in $\m_t$, $\Q/\Z$, or $\G_m$.
\end{remark}

\begin{remark}
    The results in this paper, those of \cite{FEINSCHACHERSALTMAN-BHF1992}, and the classical theory of Hilbertian fields imply that the the answer to Question \ref{ques:more hilb}(1) is ``yes'' for $n\leq 2$. What we call ``$2$-Hilbertian'' is Fein, Saltman, and Schacher's notion of ``Brauer--Hilbertian'' in \cite{FEINSCHACHERSALTMAN-BHF1992}.
    
    The whole picture is likely to be more complicated for $n>2$. Indeed, the methods of this paper use the fact that restriction to the generic point is injective for low degree cohomology groups with finite coefficients. This fails in higher degree, rendering an approach to this question based upon purity and the Lefschetz hyperplane theorem for root stacks quite a bit more subtle. That is, there is no reasonable Lefschetz theorem for unramified cohomology (in the sense of Colliot-Thélène).
\end{remark}

\begin{pg}[Acknowledgments] We thank Brian Conrad, Giovanni Inchiostro, Julia Hartmann, Sándor Kovács, and Masahiro Nakahara for helpful conversations. \end{pg}

\section{A Grothendieck--Lefschetz theorem for the Brauer group} \label{sec-02}

Grothendieck's incarnation of the Lefschetz hyperplane theorem for Picard groups (as in \cite{HARTSHORNE-ASOAV1970}) relies on the basic deformation theory of invertible sheaves to lift sheaves off of a divisor to the completion of the ambient variety along that divisor, followed by the algebraization of a certain formal matrix that describes that lifted sheaf. As we explain here, a very similar argument works when the ambient space is a tame, smooth, separated Deligne--Mumford stack with projective coarse moduli space. In \Cref{sec-025}, we show how this applies to specializations in the Brauer group.

\begin{proposition}[Grothendieck--Lefschetz] \label{0005} Let $k$ be a field, let $\mc{X}$ be a tame, smooth, separated Deligne-Mumford stack over $k$ with coarse moduli space $\pi : \mc{X} \to X$. Let $Y \subset X$ be a smooth ample divisor with ideal sheaf $I \subset \mc{O}_{X}$. Assume that \begin{enumerate}[(i)] \item $X$ is a smooth projective $k$-scheme of dimension $\dim X \ge 4$, \item $\pi$ is flat, and \item $\H^{i}(X,I^{n}) = 0$ for $n \ge 1$ and $0 \le i \le 3$. \end{enumerate} Let $Y_{n} := \Spec_{X} \mc{O}_{X}/I^{n+1} \subset X$ be the $n$th thickening of $Y$ in $X$ and set $\mc{Y}_{n}:= \mc{X} \times_{X} Y_{n}$ for all $n \in \N$. \begin{enumerate} \item For any finite locally free $\mc{O}_{\mc{X}}$-module $\mc{E}$ and any open subset $U \subseteq X$ containing $Y$, the map \begin{equation} \label{0024-eqn-04} \Gamma(\mc{X}|_{U},\mc{E}) \to \varprojlim_{n \in \N} \Gamma(\mc{Y}_{n},\mc{E}|_{\mc{Y}_{n}}) \end{equation} is an isomorphism. \item For any algebraic stack $\mc{S}$, let $\on{Vect}_{r}(\mc{S})$ denote the category of finite locally free $\mc{O}_{\mc{S}}$-modules of rank $r$. For any open subset $U \subseteq X$ containing $Y$, the functor \begin{equation} \label{0024-eqn-06} \on{Vect}_{r}(\mc{X}|_{U}) \to \varprojlim_{n \in \N} \on{Vect}_{r}(\mc{Y}_{n}) \end{equation} is fully faithful. The functor \begin{equation} \label{0024-eqn-05} \varinjlim_{Y \subset U \subset X} \on{Vect}_{r}(\mc{X}|_{U}) \to \varprojlim_{n \in \N} \on{Vect}_{r}(\mc{Y}_{n}) \end{equation} is an equivalence of categories. \item The restriction \begin{equation} \label{0005-eqn02} \xi_{\mc{X},i} : \H^{i}(\mc{X},\G_{m}) \to \H^{i}(\mc{X}|_{Y},\G_{m}) \end{equation} is an isomorphism for $i=0,1$. \item The restriction \[ \xi_{\mc{X},2}[\ell] : \H^{2}(\mc{X},\G_{m})[\ell] \to \H^{2}(\mc{X}|_{Y},\G_{m})[\ell] \] is injective for any positive integer $\ell$ which is invertible in $k$.\end{enumerate} 
\end{proposition}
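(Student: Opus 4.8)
The plan is to follow the classical Grothendieck--Lefschetz strategy, but carried out on the root stack rather than on $X$ itself. Parts (1)--(4) should be proved in order, each feeding into the next.

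\textbf{Part (1): formal functions on the stack.} I would first reduce the statement about $\Gamma(\mc{X}|_U,\mc{E})$ to a cohomological vanishing statement on $X$ via the pushforward $\pi$. Since $\pi$ is flat and $\mc{X}$ is tame, $\pi_*$ is exact and takes finite locally free sheaves to coherent sheaves on $X$; moreover by tameness $\pi_*\mc{O}_{\mc{X}} = \mc{O}_X$ and the projection formula holds. So $\Gamma(\mc{X}|_U,\mc{E}) = \Gamma(U,\pi_*\mc{E})$ and $\Gamma(\mc{Y}_n,\mc{E}|_{\mc{Y}_n}) = \Gamma(Y_n,(\pi_*\mc{E})/I^{n+1}(\pi_*\mc{E}))$ (using flatness of $\pi$ to commute pushforward with the base change to $Y_n$). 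The map in \eqref{0024-eqn-04} becomes the comparison between sections of a coherent sheaf $\mc{F} := \pi_*\mc{E}$ on $U \supseteq Y$ and sections of its completion along $Y$. Injectivity follows because $Y$ is ample, hence meets every component and the intersection of the $I^n\mc{F}$ is zero on any affine shrinking; more robustly, since $Y$ is an ample divisor and $\dim X \ge 2$, restriction $\Gamma(X,\mc{F}) \to \varprojlim \Gamma(Y_n,\mc{F}/I^{n+1}\mc{F})$ is injective and $\Gamma(X,\mc{F}) = \Gamma(U,\mc{F})$ by $S2$-type/ampleness arguments (here one uses $\H^0(X,I^n\mc{F}) $ and $\H^1(X,I^n\mc{F})$, which one bounds using hypothesis (iii) after twisting $\mc{E}$ — actually the cleanest route is: $\H^i(X, I^n) = 0$ for $i \le 3$, tensor with the locally free $\pi_*\mathcal{H\hspace{-0.3ex}o\hspace{-0.3ex}m}(\mc{O}_{\mc{X}},\mc{E})$... ). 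Surjectivity is the standard inverse-limit argument: lift a compatible system of sections order by order, where the obstruction to lifting from $Y_n$ to $Y_{n+1}$ lies in $\H^1(Y, \mc{F} \otimes I^n/I^{n+1})$ and the ambiguity in $\H^0$ of the same sheaf; one shows these pieces assemble so that the limit lifts, again invoking (iii) for $\mc{F}$.

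\textbf{Parts (2) and (3): Picard group.} Given (1), full faithfulness of \eqref{0024-eqn-06} is formal: $\Hom_{\mc{X}|_U}(\mc{E}_1,\mc{E}_2) = \Gamma(\mc{X}|_U, \mathcal{H\hspace{-0.3ex}o\hspace{-0.3ex}m}(\mc{E}_1,\mc{E}_2))$, and the right side matches the inverse limit by (1) applied to the locally free sheaf $\mathcal{H\hspace{-0.3ex}o\hspace{-0.3ex}m}(\mc{E}_1,\mc{E}_2)$. Essential surjectivity of \eqref{0024-eqn-05} is the Grothendieck existence/algebraization step: a compatible system of rank-$r$ bundles on the $\mc{Y}_n$ is a coherent sheaf on the formal completion $\widehat{\mc{X}}$ along $\mc{Y}$, which one algebraizes — since $X$ is projective and $\mc{X}$ is a tame separated DM stack with projective coarse space, formal GAGA holds for $\mc{X}$ (Olsson), producing a coherent sheaf on some $\mc{X}|_U$; flatness/local-freeness over $U$ is checked on the closed fibers. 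Then (3) for $i=0$ is the statement $\Gamma(\mc{X},\mc{O}) = \Gamma(\mc{X}|_Y,\mc{O})$, which is \eqref{0024-eqn-04} with $\mc{E} = \mc{O}$ plus the fact that $\Gamma(\mc{X},\mc{O}) = \Gamma(X,\mc{O}) = k$ (proper, and $\Gamma$ of the formal completion agrees). For $i=1$: $\H^1(-,\G_m) = \Pic$, and an invertible sheaf on $\mc{X}|_Y$ lifts by (2) to some $\mc{X}|_U$; then one must extend from $U$ to all of $\mc{X}$, using that the complement $X \setminus U$ has codimension $\ge 2$ (since $Y$ is ample of dimension $\ge 3$, any $U \supseteq Y$ has small complement) and $\mc{X}$ is smooth, so $\Pic(\mc{X}) = \Pic(\mc{X}|_U)$ by reflexivity; injectivity of $\xi_{\mc{X},1}$ uses (1) for $\mc{O}$ to see that a trivialization on $Y$ spreads out.

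\textbf{Part (4): Brauer group — the main obstacle.} An $\ell$-torsion class $\alpha \in \H^2(\mc{X}|_Y,\G_m)$ in the kernel of restriction... wait, I want injectivity of $\xi_{\mc{X},2}[\ell]$, so suppose $\alpha \in \H^2(\mc{X},\G_m)[\ell]$ dies on $\mc{X}|_Y$. Represent $\alpha$ by a $\G_m$-gerbe, equivalently (since $\ell$ is invertible and torsion) by an Azumaya algebra, or better: $\alpha$ corresponds to a $\mathrm{PGL}_r$-torsor for suitable $r$, equivalently to the obstruction to descending a $\mathrm{GL}_r$-bundle; concretely, $\alpha|_{\mc{X}|_Y} = 0$ means there is an $\alpha$-twisted locally free sheaf of rank $r = \ell$ (or a multiple) on $\mc{X}|_Y$ — this is where tameness and $\ell$ invertible matter, to guarantee twisted sheaves of the expected rank exist on the stack. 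Now run (1) and (2) for the category of $\alpha$-twisted sheaves: the deformation theory of a twisted sheaf along the thickenings $\mc{Y}_n$ is governed by the same $\mathcal{H\hspace{-0.3ex}o\hspace{-0.3ex}m}$ sheaves (which are honest, untwisted, locally free), so the twisted analogue of \eqref{0024-eqn-06} and \eqref{0024-eqn-05} holds verbatim, giving an $\alpha$-twisted locally free sheaf on some $\mc{X}|_U$, then on $\mc{X}$ itself by the codimension-$\ge 2$ extension as in (3). The existence of a locally free $\alpha$-twisted sheaf on all of $\mc{X}$ forces $\alpha = 0$ in $\Br(\mc{X})$. The hard part will be the twisted Grothendieck existence theorem on the stack $\mc{X}$: one needs that formal GAGA and the order-by-order lifting argument survive passage to $\alpha$-twisted coherent sheaves on a tame DM stack, and that the obstruction classes genuinely live in the groups $\H^1$ controlled by hypothesis (iii) after applying $\pi_*$ — i.e. that $\pi_*$ of the relevant $\mathcal{H\hspace{-0.3ex}o\hspace{-0.3ex}m}$ sheaf twisted down by $I^n$ has vanishing $\H^0$ and $\H^1$, which is exactly why the hypothesis demands vanishing up to $i = 3$ rather than just $i = 1$: the twisted sheaf has rank $\ell$, its endomorphism bundle can be large, and one may need to factor through an auxiliary resolution or a Čech computation costing two extra cohomological degrees. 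I would isolate this as a lemma on twisted formal functions and prove it by reducing, via $\pi_*$, to the untwisted coherent statement on $X$ already established in the course of part (1).
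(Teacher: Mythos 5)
Your outline of (1)--(3) is broadly the right shape, but two steps are genuinely problematic. First, the algebraization in (2): you invoke ``formal GAGA for tame DM stacks (Olsson)'' to algebraize a compatible system of bundles on the $\mc{Y}_n$, but formal GAGA concerns proper stacks over complete bases; what you actually need is a Lefschetz-type (``Leff'') algebraization along the completion of $\mc{X}$ along an ample divisor of the coarse space, which is not an off-the-shelf statement for stacks. The paper fills exactly this hole by choosing (Kresch--Vistoli) a finite flat surjection $X^{0} \to \mc{X}$ from a smooth projective \emph{scheme}, forming the fiber powers $X^{p}$ (finite flat over $X$, hence Cohen--Macaulay of dimension $\ge 4$), and deducing (1) and (2) by descent from the scheme-theoretic Lefschetz statements \cite[0EL2, 0EL7]{SP} applied to $X^{0}, X^{1}, X^{2}$. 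Your alternative route through $\pi_{*}$ could plausibly be made to work for (1) (note $\pi_{*}\mc{E}$ is in fact locally free, being étale-locally a direct summand of a finite flat pushforward), but it does not by itself produce the algebraization needed for \eqref{0024-eqn-05}. Relatedly, your explanation of why hypothesis (iii) reaches $i=3$ is not the actual reason: the paper uses (iii) to kill $\H^{i}(X,I^{n}/I^{n+1})$ for $i \le 2$ via the long exact sequence, which is what makes the truncated exponential sequences give isomorphisms on $\H^{0}$ and $\H^{1}$ of the thickenings.

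The more serious gap is in (4). You propose to produce an $\alpha$-twisted locally free sheaf of rank $\ell$ on $\mc{X}|_Y$ and lift it via a ``twisted Grothendieck existence theorem,'' which you correctly flag as the hard part but do not prove. Two problems: (a) lifting a rank-$\ell$ twisted sheaf to $\mc{X}$ would only show that $\alpha$ has index dividing $\ell$, not that $\alpha = 0$; to conclude $\alpha = 0$ you need a twisted \emph{invertible} sheaf, which is what $\alpha|_{\mc{X}|_Y}=0$ actually provides. (b) No twisted existence theorem is needed at all: the paper's trick is to lift $\alpha$ to a $\m_{\ell}$-gerbe $\mc{G} \to \mc{X}$, use part (3) to modify $[\mc{G}]$ by $\partial([L_X])$ so that $\mc{G}|_{Y}$ is a \emph{trivial} $\m_\ell$-gerbe, and then observe that a $1$-twisted invertible sheaf on $\mc{G}|_Y$ is just an ordinary line bundle on the tame DM stack $\mc{G}|_Y$, whose coarse space is still $Y$. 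Applying part (3) verbatim to $\mc{G}$ in place of $\mc{X}$ (it satisfies the same hypotheses, with the same coarse space $X$) lifts this line bundle to a $1$-twisted invertible sheaf on $\mc{G}$, trivializing the gerbe. You should replace your twisted-sheaf machinery with this reduction; as written, part (4) of your argument does not close.
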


\begin{proof}

By Kresch-Vistoli \cite[Theorem 1]{KV2004}, there exists a smooth projective $k$-scheme $X^{0}$ admitting a finite flat surjective morphism $X^{0} \to \mc{X}$. For $p \ge 0$, let $X^{p} := X^{0} \times_{\mc{X}} \dotsb \times_{\mc{X}} X^{0}$ denote its $(p+1)$-fold fiber product. Since $X^{0} \to \mc{X}$ is finite, it is in particular representable by schemes, so $X^{p}$ is a scheme for all $p \ge 0$. Since $\pi : \mc{X} \to X$ is flat, every $X^{p}$ is finite flat over $X$, hence is Cohen-Macaulay of dimension $\dim X^{p} = \dim X$ by \cite[00R5]{SP}.

Let us form the following cartesian diagram: \begin{equation} \label{0024-eqn-01} \begin{tikzpicture}[>=stealth, baseline=(current bounding box.center)] 
\matrix[matrix of math nodes,row sep=1.7em, column sep=1.7em, text height=1.7ex, text depth=0.5ex] { 
|[name=11]| Y_{0}^{2} & |[name=12]| Y_{1}^{2} & |[name=13]| Y_{2}^{2} & |[name=14]| \dotsb & |[name=15]| X^{2} \\ 
|[name=21]| Y_{0}^{1} & |[name=22]| Y_{1}^{1} & |[name=23]| Y_{2}^{1} & |[name=24]| \dotsb & |[name=25]| X^{1} \\ 
|[name=31]| Y_{0}^{0} & |[name=32]| Y_{1}^{0} & |[name=33]| Y_{2}^{0} & |[name=34]| \dotsb & |[name=35]| X^{0} \\ 
|[name=41]| \mc{Y}_{0} & |[name=42]| \mc{Y}_{1} & |[name=43]| \mc{Y}_{2} & |[name=44]| \dotsb & |[name=45]| \mc{X} \\ 
}; 
\draw[gray,->]
(11) edge (12) (12) edge (13) (13) edge (14) (14) edge (15)
(21) edge (22) (22) edge (23) (23) edge (24) (24) edge (25)
(31) edge (32) (32) edge (33) (33) edge (34) (34) edge (35)
(41) edge (42) (42) edge (43) (43) edge (44) (44) edge (45)
(31) edge (41) (32) edge (42) (33) edge (43) (35) edge (45);
\draw[gray,->,transform canvas={xshift= 3pt}](11) edge (21) (12) edge (22) (13) edge (23) (15) edge (25);
\draw[gray,->,transform canvas={xshift= 0pt}](11) edge (21) (12) edge (22) (13) edge (23) (15) edge (25);
\draw[gray,->,transform canvas={xshift=-3pt}](11) edge (21) (12) edge (22) (13) edge (23) (15) edge (25);
\draw[gray,->,transform canvas={xshift=1.5pt}](21) edge (31) (22) edge (32) (23) edge (33) (25) edge (35);
\draw[gray,->,transform canvas={xshift=-1.5pt}](21) edge (31) (22) edge (32) (23) edge (33) (25) edge (35); \end{tikzpicture} \end{equation}

(1): Let $\mc{E}$ be a finite locally free $\mc{O}_{\mc{X}}$-module. For any open subset $U \subseteq X$ containing $Y$, we have a commutative diagram \begin{equation*} \begin{tikzpicture}[>=stealth, baseline=(current bounding box.center)] 
\matrix[matrix of math nodes,row sep=1.7em, column sep=1.7em, text height=1.7ex, text depth=0.5ex] { 
|[name=21]| \varprojlim_{n \in \N}\Gamma(Y_{n}^{1},\mc{E}|_{Y_{n}^{1}}) & |[name=22]| \Gamma(X^{1}|_{U},\mc{E}|_{X^{1}|_{U}}) \\ 
|[name=31]| \varprojlim_{n \in \N}\Gamma(Y_{n}^{0},\mc{E}|_{Y_{n}^{0}}) & |[name=32]| \Gamma(X^{0}|_{U},\mc{E}|_{X^{0}|_{U}}) \\ 
|[name=41]| \varprojlim_{n \in \N}\Gamma(\mc{Y}_{n},\mc{E}|_{\mc{Y}_{n}}) & |[name=42]| \Gamma(\mc{X}|_{U},\mc{E}|_{\mc{X}|_{U}}) \\ 
}; 
\draw[->,font=\scriptsize]
(22) edge node[above=-1pt] {$\phi^{1}$} (21)
(32) edge node[above=-1pt] {$\phi^{0}$} (31)
(42) edge node[above=-1pt] {\labelcref{0024-eqn-04}} (41)
(41) edge (31) (42) edge (32);
\draw[->,font=\scriptsize,transform canvas={xshift= 1.5pt}](31) edge (21) (32) edge (22);
\draw[->,font=\scriptsize,transform canvas={xshift=-1.5pt}](31) edge (21) (32) edge (22); \end{tikzpicture} \end{equation*} where each column is an equalizer diagram. Since $X^{0}$ and $X^{1}$ are Cohen-Macaulay of dimension $\ge 2$, by \cite[0EL2]{SP} we have that $\phi^{0}$ and $\phi^{1}$ are isomorphisms, hence \labelcref{0024-eqn-04} is an isomorphism.

(2): Let $\mc{E}_{1},\mc{E}_{2} \in \on{Vect}_{r}(\mc{X}|_{U})$ be finite locally free $\mc{O}_{\mc{X}|_{U}}$-modules of rank $r$. Then (1) applied to $\HomS_{\mc{O}_{\mc{X}|_{U}}}(\mc{E}_{1},\mc{E}_{2})$ implies that the map \[ \Hom_{\mc{O}_{\mc{X}|_{U}}}(\mc{E}_{1},\mc{E}_{2}) \to \varprojlim_{n \in \N} \Hom_{\mc{Y}_{n}}(\mc{E}_{1}|_{\mc{Y}_{n}},\mc{E}_{2}|_{\mc{Y}_{n}}) \] is an isomorphism; thus $\labelcref{0024-eqn-06}$ is fully faithful. We have a commutative diagram \begin{equation*} \begin{tikzpicture}[>=stealth, baseline=(current bounding box.center)] 
\matrix[matrix of math nodes,row sep=1.7em, column sep=1.7em, text height=1.7ex, text depth=0.5ex] { 
|[name=11]| \varprojlim_{n \in \N}\on{Vect}_{r}(Y_{n}^{2}) & |[name=12]| \varinjlim_{Y \subset U \subset X} \on{Vect}_{r}(X^{2}|_{U}) \\
|[name=21]| \varprojlim_{n \in \N}\on{Vect}_{r}(Y_{n}^{1}) & |[name=22]| \varinjlim_{Y \subset U \subset X} \on{Vect}_{r}(X^{1}|_{U}) \\
|[name=31]| \varprojlim_{n \in \N}\on{Vect}_{r}(Y_{n}^{0}) & |[name=32]| \varinjlim_{Y \subset U \subset X} \on{Vect}_{r}(X^{0}|_{U}) \\
|[name=41]| \varprojlim_{n \in \N}\on{Vect}_{r}(\mc{Y}_{n}) & |[name=42]| \varinjlim_{Y \subset U \subset X} \on{Vect}_{r}(\mc{X}|_{U}) \\
}; 
\draw[->,font=\scriptsize]
(12) edge node[above=-1pt] {$\phi^{2}$} (11)
(22) edge node[above=-1pt] {$\phi^{1}$} (21)
(32) edge node[above=-1pt] {$\phi^{0}$} (31)
(42) edge node[above=-1pt] {\labelcref{0024-eqn-05}} (41)
(41) edge (31) (42) edge (32);
\draw[->,font=\scriptsize,transform canvas={xshift= 3pt}](21) edge (11) (22) edge (12);
\draw[->,font=\scriptsize,transform canvas={xshift= 0pt}](21) edge (11) (22) edge (12);
\draw[->,font=\scriptsize,transform canvas={xshift=-3pt}](21) edge (11) (22) edge (12);
\draw[->,font=\scriptsize,transform canvas={xshift= 1.5pt}](31) edge (21) (32) edge (22);
\draw[->,font=\scriptsize,transform canvas={xshift=-1.5pt}](31) edge (21) (32) edge (22); \end{tikzpicture} \end{equation*} where the vertical diagrams induce equivalences to the appropriate categories of descent data. Since $X^{0},X^{1},X^{2}$ are Cohen-Macaulay of dimension $\ge 3$, by \cite[0EL7]{SP} we have that $\phi^{0},\phi^{1},\phi^{2}$ are equivalences of categories, hence \labelcref{0024-eqn-05} is an equivalence of categories.

(3): We have an exact sequence \[ \H^{i}({\mc{Y}_{n}},(I^{n}/I^{n+1})|_{\mc{Y}_{n}}) \to \H^{i}(\mc{Y}_{n},\G_{m}) \to \H^{i}(\mc{Y}_{n-1},\G_{m}) \to \H^{i+1}({\mc{Y}_{n}},(I^{n}/I^{n+1})|_{\mc{Y}_{n}}) \] for all $i \ge 0$ and $n \ge 0$. Since $\mc{X}$ is tame, by \cite[8.1]{ALPER-GMSFAS} we have that $\pi$ is a good moduli space morphism, so by \cite[4.7(i)]{ALPER-GMSFAS} the pullback $\pi|_{Y_{n}} : \mc{Y}_{n} \to Y_{n}$ is also a good moduli space morphism for any $n \ge 0$. By \Cref{20220120-25}, the pullback morphism \[ (\pi|_{Y_{n}})^{\ast} : \H^{i}(Y_{n},I^{n}/I^{n+1}) \to \H^{i}({\mc{Y}_{n}},(I^{n}/I^{n+1})|_{\mc{Y}_{n}}) \] is an isomorphism for all $i \ge 0$. By (iii), we have $\H^{i}(X,I^{n}/I^{n+1}) = 0$ for all $n \ge 1$ and $0 \le i \le 2$. In particular, the restrictions \begin{equation} \label{0005-eqn01} \H^{i}(\mc{Y}_{n},\G_{m}) \to \H^{i}(\mc{Y}_{n-1},\G_{m}) \end{equation} are isomorphisms for $i=0,1$ and $n \ge 1$. Taking $\mc{E} = \mc{O}_{\mc{X}}$ in (1), we have that $\xi_{\mc{X},0}$ is an isomorphism.

We take $r=1$ in \labelcref{0024-eqn-05}. Since $X$ is a proper $k$-scheme, if $U \subseteq X$ is any open subset containing $Y$, then it contains all the codimension 1 points of $X$. Hence, since $X$ is regular, the restriction $\on{Vect}_{1}(\mc{X}) \to \varinjlim_{Y \subset U \subset X} \on{Vect}_{1}(\mc{X}|_{U})$ is an equivalence of categories. The projection $\varprojlim_{n \in \N} \on{Vect}_{1}(\mc{Y}_{n}) \to \on{Vect}_{1}(\mc{Y}_{0})$ is essentially surjective since the maps \labelcref{0005-eqn01} are isomorphisms for $i=1$ and all $n \ge 1$; thus $\xi_{\mc{X},1}$ is surjective. 

We prove that $\xi_{\mc{X},1}$ is injective. Let $L \in \Pic(\mc{X})$ be a line bundle. For any $n \ge 0$, we have a spectral sequence \[ \mr{E}_{1}^{p,q} = \H^{q}(X^{p},(I^{n}|_{\mc{X}} \otimes_{\mc{O}_{\mc{X}}} L)|_{X^{p}}) \implies \H^{p+q}(\mc{X},I^{n}|_{\mc{X}} \otimes_{\mc{O}_{\mc{X}}} L) \] associated to the covering $X^{0} \to \mc{X}$. For any $p \ge 0$, we have that the projection maps $X^{p} \to X^{0}$ are finite, hence $I|_{X^{p}}$ is an anti-ample invertible $\mc{O}_{X^{p}}$-module. By \cite[III, 7.6]{HARTSHORNE}, there exists some $n_{L} \gg 0$ such that $\mr{E}_{1}^{p,q} = 0$ for $n \ge n_{L}$ and $0 \le p+q \le 2$. Thus, for $n \ge n_{L}$ we have \[\H^{q}(\mc{X},I^{n}|_{\mc{X}} \otimes_{\mc{O}_{\mc{X}}} L) = 0 \] for all $0 \le q \le 2$, hence \[ \H^{q}(\mc{X},I^{n}/I^{n+1}|_{\mc{X}} \otimes_{\mc{O}_{\mc{X}}} L) = 0 \] for all $0 \le q \le 1$ and the restrictions \begin{equation} \label{0005-eqn03} \Gamma(\mc{Y}_{n+1},L|_{\mc{Y}_{n+1}}) \to \Gamma(\mc{Y}_{n},L|_{\mc{Y}_{n}}) \end{equation} are isomorphisms. If $L|_{\mc{X}|_{Y}}$ is a trivial $\mc{O}_{\mc{X}|_{Y}}$-module, then $L|_{\mc{Y}_{n}}$ and $L^{\vee}|_{\mc{Y}_{n}}$ are trivial $\mc{O}_{\mc{Y}_{n}}$-modules for all $n$ by \labelcref{0005-eqn01}. Choose some $n' \ge \max(n_{L},n_{L^{\vee}})$ and choose sections $s_{n'} \in \Gamma(\mc{Y}_{n'},L|_{\mc{Y}_{n'}})$ and $s^{\vee}_{n'} \in \Gamma(\mc{Y}_{n'},L^{\vee}|_{\mc{Y}_{n'}})$ such that the associated $\mc{O}_{\mc{Y}_{n'}}$-module morphisms $\mc{O}_{\mc{Y}_{n'}} \to L|_{\mc{Y}_{n'}}$ and $L|_{\mc{Y}_{n'}} \to \mc{O}_{\mc{Y}_{n'}}$ are mutually inverse. By \labelcref{0005-eqn03}, for each $n \ge n'$ there exist unique lifts $s_{n+1} \in \Gamma(\mc{Y}_{n+1},L|_{\mc{Y}_{n+1}})$ and $s^{\vee}_{n+1} \in \Gamma(\mc{Y}_{n+1},L^{\vee}|_{\mc{Y}_{n+1}})$ of $s_{n}$ and $s^{\vee}_{n+1}$ respectively. Since \labelcref{0024-eqn-06} is fully faithful, there exist unique sections $s \in \Gamma(\mc{X},L)$ and $s^{\vee} \in \Gamma(\mc{X},L^{\vee})$ which restrict to the systems $\{s_{n}\}_{n \ge n'}$ and $\{s^{\vee}_{n}\}_{n \ge n'}$ and the corresponding $\mc{O}_{\mc{X}}$-module morphisms $\mc{O}_{\mc{X}} \to L$ and $L \to \mc{O}_{\mc{X}}$ are mutually inverse, hence $L$ itself is trivial.

(4): Let $\ell$ be a positive integer such that $\ell \in k^{\times}$ and let $\alpha' \in \H_{\et}^{2}(\mc{X},\G_{m})[\ell]$ be an $\ell$-torsion Brauer class such that $\alpha'|_{\mc{X}|_{Y}} = 0$ in $\H_{\et}^{2}(\mc{X}|_{Y},\G_{m})$. Let $\mc{G}' \to \mc{X}$ be a $\G_{m}$-gerbe corresponding to $\alpha'$; then $\mc{G}' \times_{\mc{X}} \mc{X}|_{Y} \to \mc{X}|_{Y}$ is a trivial $\G_{m}$-gerbe. Let $\mc{G} \to \mc{X}$ be a $\m_{\ell}$-gerbe whose class $[\mc{G}] \in \H_{\et}^{2}(\mc{X},\m_{\ell})$ lifts $\alpha' = [\mc{G}'] \in \H_{\et}^{2}(\mc{X},\G_{m})$. Since $[\mc{G}|_{Y}] \in \H_{\et}^{2}(\mc{X}|_{Y},\m_{\ell})$ has trivial image in $\H_{\et}^{2}(\mc{X}|_{Y},\G_{m})$, there exists some line bundle $L_{Y} \in \H_{\et}^{1}(\mc{X}|_{Y},\G_{m})$ which maps to $[\mc{G}|_{Y}]$ under the coboundary map $\partial : \H_{\et}^{1}(\mc{X}|_{Y},\G_{m}) \to \H_{\et}^{2}(\mc{X}|_{Y},\m_{\ell})$ of the Kummer sequence. By (3) applied to $\mc{X}$, there exists some line bundle $L_{X} \in \H_{\et}^{1}(\mc{X},\G_{m})$ restricting to $L_{Y}$; after replacing $[\mc{G}]$ by $[\mc{G}] - \partial([L_{X}])$, we may assume that $\mc{G}|_{Y} \to \mc{X}|_{Y}$ is a trivial $\m_{\ell}$-gerbe. Let $\mc{L}_{Y}$ be a 1-twisted invertible $\mc{O}_{\mc{G}|_{Y}}$-module. Since the map $\mc{G} \to \mc{X}$ is a $\m_{\ell}$-gerbe, the stack $\mc{G}$ is tame and the composition $\mc{G} \to \mc{X} \to X$ is a flat coarse moduli space morphism (i.e. satisfies (ii)). Thus, by (3) applied to $\mc{G}$, there exists a 1-twisted invertible $\mc{O}_{\mc{G}}$-module $\mc{L}_{X}$ such that $\mc{L}_{X}|_{\mc{G}|_{Y}} \simeq \mc{L}_{Y}$. Hence $\mc{G} \to \mc{X}$ is a trivial $\m_{\ell}$-gerbe, so $\mc{G}' \to \mc{X}$ is a trivial $\G_{m}$-gerbe as well. \end{proof}

\begin{lemma} \label{20220120-25} Let $\pi : \mc{X} \to X$ be a good moduli space morphism. For any quasi-coherent $\mc{O}_{X}$-module $\mc{F}$, the pullback \[ \pi^{\ast} : \H^{i}(X,\mc{F}) \to \H^{i}(\mc{X},\mc{F}|_{\mc{X}}) \] is an isomorphism for all $i \ge 0$. \end{lemma}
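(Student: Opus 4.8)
The plan is to compute both sides through the Leray spectral sequence for $\pi$ and to check that it degenerates. Write $\mc{G} := \mc{F}|_{\mc{X}} = \pi^{\ast}\mc{F}$. The Leray spectral sequence
\[ \H^{p}(X, R^{q}\pi_{\ast}\mc{G}) \implies \H^{p+q}(\mc{X},\mc{G}) \]
has edge map $\H^{p}(X,\pi_{\ast}\mc{G}) \to \H^{p}(\mc{X},\mc{G})$, and precomposing it with the map induced by the unit $\mc{F} \to \pi_{\ast}\pi^{\ast}\mc{F} = \pi_{\ast}\mc{G}$ recovers the pullback $\pi^{\ast} : \H^{p}(X,\mc{F}) \to \H^{p}(\mc{X},\mc{G})$. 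Hence it suffices to establish (a) the unit $\mc{F} \to \pi_{\ast}\pi^{\ast}\mc{F}$ is an isomorphism, and (b) $R^{q}\pi_{\ast}(\pi^{\ast}\mc{F}) = 0$ for all $q \ge 1$; then the spectral sequence collapses onto the row $q = 0$, its edge map is an isomorphism, and so is $\pi^{\ast}$.

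Recall that, by definition, a good moduli space morphism is cohomologically affine — in particular quasi-compact and quasi-separated, with $\pi_{\ast} : \on{QCoh}(\mc{X}) \to \on{QCoh}(X)$ exact — and the canonical map $\mc{O}_{X} \to \pi_{\ast}\mc{O}_{\mc{X}}$ is an isomorphism (Alper \cite{ALPER-GMSFAS}). Statement (a) is part of Alper's characterization of good moduli spaces; concretely, the claim is étale-local on $X$, so one may assume $X = \Spec A$ and present $\mc{F}$ as a cokernel $\mc{O}_{X}^{(J)} \to \mc{O}_{X}^{(I)} \to \mc{F} \to 0$ of free modules, then pull back to $\mc{X}$ and apply $\pi_{\ast}$ — which is exact and, $\pi$ being quasi-compact and quasi-separated, commutes with arbitrary direct sums — and use $\pi_{\ast}\mc{O}_{\mc{X}} = \mc{O}_{X}$ to identify $\pi_{\ast}\pi^{\ast}\mc{F}$ with $\mc{F}$ compatibly with the unit.

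For (b): since $\pi$ is quasi-compact and quasi-separated, $R^{q}\pi_{\ast}\mc{G}$ is quasi-coherent and its formation commutes with étale base change on $X$, so we may assume $X = \Spec A$, in which case it suffices to show $\H^{q}(\mc{X},\mc{G}) = 0$ for $q \ge 1$. As $\pi$ is cohomologically affine, its pushforward is exact on quasi-coherent sheaves, so the derived pushforward of $\mc{G}$ agrees with $\pi_{\ast}\mc{G}$ and $R\Gamma(\mc{X},\mc{G}) \simeq R\Gamma(X,\pi_{\ast}\mc{G})$; since $X$ is affine and $\pi_{\ast}\mc{G}$ is quasi-coherent, this complex is concentrated in degree $0$, giving the vanishing. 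Combined with (a), this completes the proof.

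The only point that genuinely needs care — and it is harmless throughout this paper, where $\mc{X}$ is a tame separated Deligne--Mumford stack and $X$ a scheme — is the identification of the sheaf cohomology $\H^{i}(\mc{X},-)$ with cohomology computed in the derived category of quasi-coherent $\mc{O}_{\mc{X}}$-modules, which is what legitimizes the reduction ``cohomological affineness over an affine base kills higher cohomology'' used in (b); this is standard for the quasi-compact, quasi-separated algebraic stacks with quasi-affine diagonal at issue. Apart from this bookkeeping, the statement is a formal consequence of Alper's theory of good moduli spaces, so I do not expect a serious obstacle.
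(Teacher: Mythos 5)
Your argument is correct and takes essentially the same route as the paper: both compute $\H^{i}(\mc{X},\pi^{\ast}\mc{F})$ via the Leray spectral sequence for $\pi$, which degenerates because $\mb{R}^{q}\pi_{\ast}$ kills quasi-coherent sheaves in positive degrees (cohomological affineness of a good moduli space morphism, cited from Alper). The paper's proof is just a one-line version of yours, leaving the identification $\mc{F} \simeq \pi_{\ast}\pi^{\ast}\mc{F}$ and the comparison of cohomology theories implicit where you spell them out.
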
 \begin{proof} This follows from the Leray spectral sequence \[ \mr{E}_{2}^{p,q} = \H^{p}(X,\mb{R}^{q}\pi_{\ast}(\mc{F}|_{\mc{X}})) \Rightarrow \H^{p+q}(\mc{X},\mc{F}|_{\mc{X}}) \] where we have $\mb{R}^{i}\pi_{\ast}(\mc{F}|_{\mc{X}}) = 0$ for $i \ge 1$ by \cite[3.10(v)]{ALPER-GMSFAS}. \end{proof}

\section{Proofs of the main theorems}\label{sec-025}

In this section, we use a series of reductions and purity to reduce the main results to the \Cref{0005}, the Grothendieck--Lefschetz theorem for stacks.

\begin{pg} \label{0009} We note that in \Cref{0004} we are free to replace $S$ by its reduction $S_{\mr{red}}$ or by any scheme that is birational to it. \end{pg}

\begin{pg}[Reduction to $\dim S = 1$] \label{0008} By \Cref{0009}, we may replace $S$ by an affine open neighborhood of the reduced scheme $\overline{\{s\}}$ to assume that $S$ is affine and integral and $\dim S = d$. Let $x_{1},\dotsc,x_{d} \in \kappa(\eta_{S})$ be a transcendence basis of the function field $\kappa(\eta_{S})$ over $F$; after replacing $S$ by an open subscheme, we may assume that $x_{i} \in \Gamma(S,\mc{O}_{S})$ for all $i$; let $S \to \A_{F}^{d} = \Spec F[t_{1},\dotsc,t_{d}]$ be the quasi-finite dominant $F$-morphism sending $t_{i} \mapsto x_{i}$ for all $i$. Set $F' := F(t_{1},\dotsc,t_{d-1})$ and let $S' := S \times_{\A_{F}^{d}} \A_{F'}^{1}$ where $\A_{F'}^{1} \to \A_{F}^{d}$ corresponds to the natural map $F[t_{1},\dotsc,t_{d}] \to F'[t_{d}]$. Let $f : S' \to S$ be the projection. \begin{center} \begin{tikzpicture}[>=angle 90] 
\matrix[matrix of math nodes,row sep=2em, column sep=2em, text height=1.7ex, text depth=0.5ex] { 
|[name=11]| S' & |[name=12]| S \\ 
|[name=21]| \A_{F'}^{1} & |[name=22]| \A_{F}^{d} \\
}; 
\draw[->,font=\scriptsize]
(11) edge node[above=-1pt] {$f$} (12) (21) edge (22) (11) edge (21) (12) edge (22); \end{tikzpicture} \end{center} We note that $f((S')_{(0)}) \subseteq S_{(d-1)}$ and that $f : S' \to S$ induces an isomorphism of function fields $\kappa(\eta_{S}) \to \kappa(\eta_{S'})$; moreover $\trdeg_{k}(F') = \trdeg_{k}(F)+d-1 \ge 3$. Thus, by replacing $F,S,\alpha$ by $F',S',\alpha|_{S'}$, we may assume that $\dim S = 1$. \end{pg}

\begin{pg}[Reduction to a model over $k$] By \Cref{0008}, we may assume that $S$ is affine, integral, of finite type over $F$, and $\dim S = 1$. By a limit argument, we can choose a $k$-subalgebra $A \subseteq F$ such that \begin{enumerate}[(i)] \item $A$ is of finite type over $k$, \item there exists a finite type $A$-scheme $U$ and an $F$-isomorphism $U \times_{\Spec A} \Spec F \simeq S$, and \item $\alpha$ is in the image of $\Br(U) \to \Br(S)$. \end{enumerate}

After possibly replacing $U$ by an open subscheme, we may assume that $U$ is regular; since $k$ is perfect, we have that $U$ is smooth over $k$. After a further localization, we may assume that there exists a projective $k$-scheme $X$ of dimension $\dim X = \trdeg_{k}(\kappa(S))$ such that there exists an open immersion $U \to X$.

Let $\overline{S}$ be a projective closure of $S$; we may replace $\overline{S}$ by its normalization to assume that $\overline{S}$ is regular. Applying \cite[0BX7]{SP} to the composition $S \to U \to X$, we obtain a nonconstant $k$-morphism $g : \overline{S} \to X$. If $s \in S_{(0)}$ is a closed point, then $g(s)$ is a codimension 1 point of $X$; by \Cref{0006}, for all but finitely many ample divisors $Y \subset X$, we have $\eta_{Y} \in g(S_{(0)})$. Thus we reduce to proving \Cref{0007} below. \end{pg}

\begin{lemma} \label{0006} Let $F$ be a field, let $S$ be a proper $F$-scheme of dimension 1, let $X$ be a scheme, let $\mc{L}$ be a line bundle on $X$, let $f : S \to X$ be a nonconstant morphism. For any section $s \in \Gamma(X,\mc{L})$ such that $X_{s}$ is affine, the pullback $f^{\ast}s \in \Gamma(S,f^{\ast}\mc{L})$ vanishes at a closed point of $S$. \end{lemma}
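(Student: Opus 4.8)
The plan is to argue by contradiction. Suppose $f^{\ast}s$ vanishes at no closed point of $S$. Since $S$ is proper, hence of finite type, over $F$, it is Jacobson; equivalently, every nonempty closed subscheme of $S$ contains a closed point of $S$. Hence the vanishing locus $Z \subseteq S$ of $f^{\ast}s$, which by assumption contains no closed point, must be empty. In other words the morphism $\mc{O}_{S} \to f^{\ast}\mc{L}$ determined by $f^{\ast}s$ is an isomorphism, i.e. $f^{\ast}s$ is a nowhere-vanishing global section of $f^{\ast}\mc{L}$.

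The next step is to observe that this forces $f$ to factor through the open subscheme $X_{s} \subseteq X$. Indeed, $X_{s}$ is exactly the locus where the map $\mc{O}_{X} \to \mc{L}$ determined by $s$ is an isomorphism, and whether a morphism of invertible sheaves is an isomorphism may be checked fibrewise; since fibres pull back, $f^{-1}(X_{s})$ is precisely the non-vanishing locus of $f^{\ast}s$, which we have just seen is all of $S$. Now I would use the hypothesis that $X_{s}$ is affine, say $X_{s} = \Spec A$: the morphism $f \colon S \to \Spec A$ corresponds to a ring homomorphism $A \to B := \Gamma(S,\mc{O}_{S})$ and therefore factors as $S \to \Spec B \to \Spec A \hookrightarrow X$. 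Here properness does the real work: finiteness of coherent cohomology for the proper $F$-scheme $S$ shows that $B = \Gamma(S,\mc{O}_{S})$ is a finite-dimensional $F$-algebra, hence Artinian, so $\Spec B$ is a finite discrete topological space, with exactly one point per connected component of $S$ (each such component having local Artinian global sections). Consequently $f$ has finite image and is constant on each connected component of $S$; in particular, if $S$ is connected — as it is in the application, where $S$ is the normalization of a projective closure of an integral curve — then $f$ is constant, contradicting the hypothesis. More generally, if $f$ is nonconstant then it is nonconstant on some connected component $S_{0} \subseteq S$, and applying the above to $S_{0}$ yields a closed point of $S_{0} \subseteq S$ at which $f^{\ast}s$ vanishes.

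I do not anticipate a genuine obstacle here: the content of the lemma is just that a nonconstant map from a proper curve cannot land entirely in the affine complement of a hypersurface, and the only point needing a little care is the fibrewise identification of $f^{-1}(X_{s})$ with the non-vanishing locus of $f^{\ast}s$. The conceptual heart is the collapse of any morphism from a proper $F$-scheme to an affine scheme through $\Spec \Gamma(S,\mc{O}_{S})$, which is finite over $F$. The lemma will then be invoked for the morphism $g\colon \overline{S}\to X$ built in the reduction to a model over $k$, with $s$ a section cutting out an affine open containing the image of the closed points of $S$.
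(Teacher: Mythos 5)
Your proof is correct and takes essentially the same route as the paper's: both identify $f^{-1}(X_{s})$ with the non-vanishing locus $S_{f^{\ast}s}$ and use that a morphism from a proper (connected) $F$-scheme to an affine scheme factors through the finite $F$-scheme $\Spec\Gamma(S,\mc{O}_{S})$, hence is constant, so nonconstancy of $f$ forces $S_{f^{\ast}s}\ne S$ and the (Jacobson) vanishing locus contains a closed point. The paper states this more tersely (reducing at the outset to $S$ connected, as you also note is automatic in the intended application), but the content is identical.
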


\begin{proof} We may assume that $S$ is connected. We have that $f^{-1}(X_{s}) = S_{f^{\ast}s}$. Since $X_{s}$ is affine and $S$ is proper over $F$ and $\dim S \ge 1$, we have $f(S) \not\subseteq X_{s}$, hence $S_{f^{\ast}s} = f^{-1}(X_{s}) \ne S$. \end{proof}

\begin{theorem} \label{0007} Let $k$ be a perfect field, let $U$ be a smooth $k$-scheme of dimension $\dim U \ge 4$, and let $G \in \Br(U)$ be a finite subgroup. Assume that $\ell := |G|$ is invertible in $k$. Then there exist infinitely many points $u \in U^{(1)}$ such that the composition $G \to \Br(U) \to \Br(\kappa(u))$ is injective. \end{theorem}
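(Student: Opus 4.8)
We may assume $U$ is integral, with function field $K := \kappa(\eta_{U})$ of transcendence degree $d := \dim U \ge 4$ over $k$; since $U$ is regular, $G$ is a finite subgroup of the torsion group $\Br(K)$, so $G \subseteq \Br(K)[\ell]$ with $\ell = |G|$. The plan is to spread $G$ out to a smooth projective model, pass to a root stack on which $G$ becomes unramified, and apply \Cref{0005}(4) to a smooth ample divisor. Concretely, I would first fix a smooth projective $k$-scheme $X$ of dimension $d$ together with an open immersion $U \hookrightarrow X$; this is permissible after replacing $U$ by a smooth birational model and shrinking (here one uses resolution of singularities — the point at which the hypotheses on $k$ enter, harmless in characteristic zero and requiring more care otherwise). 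Such a replacement alters $U^{(1)}$ only by the finitely many generic points of prime divisors of $X$ disjoint from $U$, so it suffices to exhibit infinitely many prime divisors $Y \subset X$ with $\eta_{Y} \in U$ and with $G \hookrightarrow \Br(\kappa(\eta_{Y}))$. As $G$ is finite there is a reduced effective divisor $D \subset X$ — which, after one more blow-up, I take to have simple normal crossings and to contain $X \setminus U$ — such that $G \subseteq \Br(X \setminus D)$. Writing $D = D_{1} \cup \dotsb \cup D_{r}$ for the decomposition into components, form the root stack
\[
\mc{X} := \sqrt[\ell]{(X,D_{1})} \times_{X} \dotsb \times_{X} \sqrt[\ell]{(X,D_{r})}
\]
with coarse moduli space $\pi : \mc{X} \to X$. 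Because $\ell$ is invertible in $k$ and $D$ has normal crossings in the smooth $k$-scheme $X$, the stack $\mc{X}$ is a tame, smooth, separated Deligne--Mumford stack over $k$ of dimension $d$ with $\pi$ flat; thus $\mc{X}$ satisfies conditions (i) and (ii) of \Cref{0005}. Moreover $\pi$ is an isomorphism over $X \setminus D$, so $\kappa(\eta_{\mc{X}}) = K$; and for each $\alpha \in G$ the residue along $D_{i}$ has order dividing $\ell$, so the pullback of $\alpha$ to $\mc{X}$ is unramified at every codimension-one point (the residue along the stacky divisor over $D_{i}$ is multiplied by the ramification index $\ell$). By purity for the Brauer group of the regular Deligne--Mumford stack $\mc{X}$, this pullback lies in $\H^{2}(\mc{X},\G_{m})$, and being $\ell$-torsion we obtain $G \subseteq \H^{2}(\mc{X},\G_{m})[\ell]$.

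To produce the divisors $Y$, fix an ample line bundle $\mc{L}$ on $X$. Serre duality together with $d \ge 4$ provides an $m_{0}$ with $\H^{i}(X,\mc{L}^{-j}) = 0$ for $1 \le i \le 3$ and all $j \ge m_{0}$; enlarging $m_{0}$ we may also assume $\mc{L}^{\otimes m}$ is very ample for $m \ge m_{0}$, so that by Bertini's theorem (in a form valid also over finite fields) a general member $Y \in |\mc{L}^{\otimes m}|$ is a smooth irreducible divisor meeting every stratum of $D$ transversally. For such a $Y$ we have $Y \not\subseteq D$, the divisor $D \cap Y$ has normal crossings in $Y$, and $\H^{i}(X, I_{Y}^{\,n}) = \H^{i}(X,\mc{L}^{-nm}) = 0$ for $1 \le i \le 3$ and $n \ge 1$, while the case $i = 0$ holds automatically; hence condition (iii) of \Cref{0005} is met. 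Now \Cref{0005}(4) makes the restriction $\H^{2}(\mc{X},\G_{m})[\ell] \to \H^{2}(\mc{X}|_{Y},\G_{m})[\ell]$ injective, so $G \to \H^{2}(\mc{X}|_{Y},\G_{m})$ is injective. The stack $\mc{X}|_{Y} = \mc{X}\times_{X} Y$ is again a regular integral tame Deligne--Mumford stack whose generic point lies over $\eta_{Y}$ with residue field $\kappa(\eta_{Y})$, so $\H^{2}(\mc{X}|_{Y},\G_{m}) \hookrightarrow \Br(\kappa(\eta_{Y}))$; moreover, since $\pi$ and $\pi|_{Y}$ are isomorphisms over a neighbourhood of $\eta_{Y} \in U$, the resulting injection $G \hookrightarrow \Br(\kappa(\eta_{Y}))$ is precisely the composition $G \subseteq \Br(U) \to \Br(\mc{O}_{U,\eta_{Y}}) \to \Br(\kappa(\eta_{Y}))$ of the statement. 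Letting $m$ range over the integers $\ge m_{0}$ yields infinitely many distinct such $Y$ (they have distinct degrees against $\mc{L}$), hence infinitely many $u = \eta_{Y} \in U^{(1)}$ with $G \hookrightarrow \Br(\kappa(u))$, which is the theorem.

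The main obstacle lies in the geometric set-up: producing a smooth projective model with a normal-crossings boundary divisor, and — more essentially — verifying that the classes in $G$, which are genuinely ramified on $U$, become unramified and hence define honest Brauer classes after passing to the root stack $\mc{X} = \sqrt[\ell]{(X,D)}$. This is the device that recasts the problem as an instance of \Cref{0005}; once it is in place, the selection of $Y$ by Bertini and Serre vanishing, and the descent $\H^{2}(\mc{X}|_{Y},\G_{m}) \hookrightarrow \Br(\kappa(\eta_{Y}))$, are routine.
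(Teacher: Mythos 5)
Your strategy coincides with the paper's: compactify $U$, pass to the $\ell$th root stack along the boundary so that $G$ becomes unramified, select smooth ample divisors $Y$ by Serre vanishing and Bertini, apply \Cref{0005}(4), and descend to $\Br(\kappa(\eta_{Y}))$ using regularity of $\mc{X}|_{Y}$. All of that matches, including the observation that the residues of $G$ die on the root stack and that purity then puts $G$ inside $\Br(\mc{X})$.

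The one genuine gap is the step you flag and then leave unresolved: producing a smooth projective compactification $X$ of (a model of) $U$ with a strict normal crossings boundary divisor via ``resolution of singularities \ldots requiring more care otherwise.'' The theorem is asserted over an arbitrary perfect field with $\ell \in k^{\times}$, and resolution is not known in positive characteristic in dimension $\ge 4$, so as written your argument proves only the characteristic-zero case. The paper's substitute is Temkin's refinement of Gabber's theorem on prime-to-$\ell$ alterations: there is a smooth projective $X'$ and a proper, surjective, generically finite $f : X' \to X$ of degree prime to $\ell$ such that $f^{-1}(X \setminus U)$ is a strict normal crossings divisor. Since $\gcd(\ell,[\kappa(\eta_{X'}):\kappa(\eta_{X})])=1$ and $G$ is $\ell$-torsion, a restriction--corestriction argument shows $f^{\ast}$ is injective on $G$, so one may replace $(X,G)$ by $(X',f^{\ast}G)$ and run exactly your argument there; all but finitely many of the resulting ample divisors $Y \subset X'$ map onto codimension-one subvarieties of $X$ meeting $U$, and injectivity of $G \to \Br(\kappa(\eta_{Y}))$ forces injectivity into the subfield $\Br(\kappa(f(\eta_{Y})))$, giving the required infinitely many points of $U^{(1)}$. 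With that substitution your proof closes; the remaining small points (taking $D$ to be only the divisorial part of $X \setminus U$ and invoking purity for the codimension-$\ge 2$ locus, and the precise references for unramifiedness on the root stack and purity for $\Br(\mc{X})$) are as in the paper.
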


\begin{proof} Let $X$ be a proper $k$-scheme such that $U$ admits an open embedding $U \subseteq X$; let $D := X \setminus U$ be the complement. By Temkin's improvement \cite[4.3.1(iii)]{TEMKIN-TDADBPA2017} of Gabber's theorem \cite[1.3]{ILLUSIE-OGRU2009}, there exists a smooth projective $k$-scheme $X'$ and a prime-to-$\ell$ alteration $f : X' \to X$ such that $D' := f^{-1}(D)$ is a strict normal crossings divisor in $X'$. Since $\gcd(\ell,[\kappa(\eta_{X'}):\kappa(\eta_{X})]) = 1$, if $\alpha \in G$ is an $\ell$-torsion Brauer class such that $f^{\ast}\alpha = 0$ in $\Br(X')$, then $\alpha = 0$ itself. Hence we may replace $X,G$ by $X',f^{\ast}G$ to assume that $X$ is smooth projective over $k$ and that $D \subset X$ is a strict normal crossings divisor.

Let $D_{1},\dotsc,D_{r}$ be the irreducible components of $D$ and let \[ \mc{X} := \sqrt[\ell]{(X,D_{1})} \times_{X} \dotsb \times_{X} \sqrt[\ell]{(X,D_{r})} \] be the product of the $\ell$th root stacks. Since $D$ is strict normal crossings in $X$, we have that $\mc{X}$ is regular. By the argument of \cite[3.2.1]{LIEBLICH-PAIITBGOAAS}, there exists an open substack $\mc{U}' \subset \mc{X}$ containing $U \times_{X} \mc{X} \simeq U$ and all codimension 1 points of $\mc{X}$ and such that $G$ is contained in the subgroup $\Br(\mc{U}') \subseteq \Br(U)$. By purity for the Brauer group \cite{FUJIWARA-APOTAPC}, the restriction $\Br(\mc{X}) \to \Br(\mc{U}')$ is an isomorphism, so we may view $G$ as a subgroup of $\Br(\mc{X})$.

Let $\mc{O}_{X}(1)$ be an ample line bundle on $X$. By \cite[III, 7.6]{HARTSHORNE}, we may choose $N \gg 0$ such that $\H^{i}(X,\mc{O}_{X}(-nN)) = 0$ for all $0 \le i \le 3$ and $n \ge 1$. By Bertini's theorem, we may choose infinitely many smooth ample divisors $Y \in |\mc{O}_{X}(N)|$ such that $D|_{Y} \subset Y$ is a strict normal crossings divisor; then the restriction \[ \mc{X}|_{Y} \simeq \sqrt[\ell]{(Y,D_{1}|_{Y})} \times_{Y} \dotsb \times_{Y} \sqrt[\ell]{(Y,D_{r}|_{Y})} \] is regular. By \Cref{0005}(4), the restriction $\Br(\mc{X}) \to \Br(\mc{X}|_{Y})$ is injective. Since $\mc{X}|_{Y}$ is regular, the restriction $\Br(\mc{X}|_{Y}) \to \Br(\kappa(\eta_{Y}))$ is injective. \end{proof}

\begin{corollary} \label{0016} Let $k$ be a perfect field, let $S$ be a smooth $k$-scheme of dimension $\dim S \ge 4$, let $\alpha \in \Br(S)$ be a Brauer class such that $\per(\alpha)$ is invertible in $k$. If $\alpha|_{s} = 0$ for all codimension 1 points $s \in S^{(1)}$, then $\alpha = 0$. \end{corollary}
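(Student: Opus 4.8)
The plan is to deduce \Cref{0016} directly from \Cref{0007}. We may assume $S$ is connected, and hence (being smooth over the perfect field $k$) integral of dimension $\dim S \ge 4$. Since $\per(\alpha)$ is finite, the subgroup it generates, $G := \langle\alpha\rangle \subseteq \Br(S)$, is finite cyclic of order $|G| = \per(\alpha)$, which is invertible in $k$ by hypothesis. So the pair $(S,G)$ meets the hypotheses of \Cref{0007}.

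Next I would invoke \Cref{0007} to produce a codimension $1$ point $u \in S^{(1)}$ --- in fact infinitely many, though a single one suffices --- for which the composite $G \to \Br(S) \to \Br(\kappa(u))$ is injective. The hypothesis of \Cref{0016} says $\alpha|_{u} = 0$ in $\Br(\kappa(u))$; since $\alpha$ generates $G$, the injectivity just obtained forces $\alpha = 0$, which is the claim.

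I do not expect a genuine obstacle here: the substance lies entirely in \Cref{0007} (and, through it, in the Grothendieck--Lefschetz theorem \Cref{0005}), and \Cref{0016} is merely the observation that triviality of a Brauer class whose period is invertible in $k$ is already detected at a single well-chosen codimension $1$ point of a smooth model of dimension $\ge 4$. One could instead spread $S$ and $\alpha$ out over a finitely generated $k$-subalgebra and argue as in the reductions preceding \Cref{0007}, but since $S$ is already smooth of the required dimension this detour is unnecessary.
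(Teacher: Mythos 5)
Your argument is correct and is exactly the intended deduction: the paper's own proof of this corollary is the one-line remark that it follows from \Cref{0007}, and your write-up simply fills in the (routine) details of applying that theorem to $G=\langle\alpha\rangle$. No issues.
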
 \begin{proof} This follows from \Cref{0007}. \end{proof}

\begin{remark} \label{0023} In \Cref{0007}, suppose that the subgroup $G$ is unramified (i.e. $D = \emptyset$, so $G \subseteq \Br(X)$). Using classical results from SGA 2, we may give a short proof that, for a smooth projective $k$-scheme $X$ of dimension $\dim X \ge 4$ and any smooth ample divisor $Y \subset X$, the $\ell$-primary component of $\ker(\Br(X) \to \Br(Y))$ is $\ell$-divisible. Let $\alpha \in \Br(X)$ be a Brauer class and set $\ell := \per(\alpha)$. After replacing $k$ by a prime-to-$\ell$ extension, we may assume that $k$ contains a primitive $\ell$th root of unity; in this case we may choose an isomorphism $\Z/(\ell) \simeq \m_{\ell}$ of sheaves on the big fppf site of $\Spec k$. The Kummer sequence induces a commutative diagram \begin{equation} \label{0023-eqn01} \begin{tikzpicture}[>=angle 90, baseline=(current bounding box.center)] 
\matrix[matrix of math nodes,row sep=2em, column sep=2em, text height=1.7ex, text depth=0.5ex] { 
|[name=11]| \H_{\et}^{2}(X,\m_{\ell}) & |[name=12]| \H_{\et}^{2}(X,\G_{m}) & |[name=13]| \H_{\et}^{2}(X,\G_{m}) & |[name=14]| \H_{\et}^{3}(X,\m_{\ell}) \\ 
|[name=21]| \H_{\et}^{2}(X,\m_{\ell}) & |[name=22]| \H_{\et}^{2}(Y,\G_{m}) & |[name=23]| \H_{\et}^{2}(Y,\G_{m}) & |[name=24]| \H_{\et}^{3}(Y,\m_{\ell}) \\ 
}; 
\draw[->,font=\scriptsize]
(11) edge (12) (12) edge node[above=-1pt] {$\times \ell$} (13) (13) edge (14) (21) edge (22) (22) edge node[below=-1pt] {$\times \ell$} (23) (23) edge (24) (11) edge node[left=-1pt] {$f_{2}$} (21) (12) edge (22) (13) edge (23) (14) edge node[left=-1pt] {$f_{3}$} (24); \end{tikzpicture} \end{equation} where the rows are exact and the vertical arrows are restriction maps. Since $\dim X \ge 4$, taking $i=3,n=\dim X,c = 0$ in \cite[Exp. XIV, 5.7]{SGA2new} gives that $f_{2}$ is bijective and $f_{3}$ is injective. The claim then follows from a diagram chase on \labelcref{0023-eqn01}. \end{remark}

\begin{remark} \label{0002} We note that, in \Cref{0007}, it is not necessarily true that the set of points $u \in U$ such that $G \to \Br(U) \to \Br(\kappa(u))$ is not injective is finite. One way to construct examples of Brauer classes $\alpha \in \Br(S)$ vanishing at infinitely many closed points is to arrange that there exists a surjective $k$-morphism $f : S' \to S$ where $S'(k)$ is infinite and $f^{\ast}\alpha = 0$; if so, then $f(S'(k)) \cap S[\alpha^{-1}] = \emptyset$ by \Cref{0013}. For example, let $k$ be an infinite field of characteristic $\ch k \ne 2$, let $S := \P_{k}^{1} \setminus \{0,\infty\}$, let $a \in k^{\times} \setminus (k^{\times})^{2}$ be a non-square constant, let $\mc{A} := (a,t)$ be the quaternion algebra on $S$. Then $\alpha := [\mc{A}] \in \Br(S)$ is nontrivial \cite[1.3.8]{GILLE-SZAMUELY}, but $\mc{A}$ is trivialized after pullback by the squaring map $f : \P_{k}^{1} \to \P_{k}^{1}$, hence $f(S(k)) \cap S[\alpha^{-1}] = \emptyset$, i.e. for any square constant $b \in (k^{\times})^{2}$, the specialization $[\mc{A}|_{b}] \in \Br(k)$ is trivial. \par In general, by results of Yanchevskii \cite{YANCHEVSKII-KUOCBASFOSCA1985} and Mestre \cite{MESTRE-APCDVDEDBR2KXAQP1994}, \cite{MESTRE-APCDVDEDBR2KXAHPARC1994}, \cite{MESTRE-APCDVDEDBR2KXOKEUCF1996}, \cite{MESTRE-APCDVDEDBR2KXACP1996}, it is known that certain Brauer classes on open subschemes of $\P_{k}^{1}$ are trivialized after pullback by finite morphisms $\P_{k}^{1} \to \P_{k}^{1}$. More precisely, let $S \subseteq \P_{k}^{1}$ be an open subscheme and let $\alpha \in \Br(S)$ be a Brauer class such that $S(k) \setminus S[\alpha^{-1}] \ne \emptyset$. If either \begin{enumerate}[(i)] \item $k$ is Henselian, or \item $\alpha$ is $2$-torsion and $\sum_{x \in \P_{k}^{1} \setminus S} [\kappa(x):k] \le 4$, \end{enumerate} then there exists a finite morphism $f : \P_{k}^{1} \to \P_{k}^{1}$ such that $f^{\ast}\alpha = 0$ (see also \cite[II.Appendix]{SERRE-GALOISCOHOMOLOGY}). \end{remark}

\begin{lemma} \label{0013} Let $f : X \to Y$ be a morphism of schemes, let $\alpha \in \Br(Y)$ be a Brauer class such that $f^{\ast}\alpha \in \Br(X)$ is trivial, let $y \in Y$ be a point such that the fiber $X_{y} := X \times_{Y} \Spec \kappa(y)$ admits a $\kappa(y)$-point. Then $y \not\in Y[\alpha^{-1}]$. \end{lemma}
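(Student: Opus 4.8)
The plan is to use the $\kappa(y)$-point of the fibre $X_{y}$ to factor the residue-field point $\iota\colon\Spec\kappa(y)\to Y$ (the canonical morphism with image $y$) through $f$, and then simply pull $\alpha$ back. Recall that $X_{y}=X\times_{Y}\Spec\kappa(y)$ fits into a cartesian square with projections $q\colon X_{y}\to X$ and $p\colon X_{y}\to\Spec\kappa(y)$, so that $f\circ q=\iota\circ p$.

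First I would unwind the hypothesis: a $\kappa(y)$-point of $X_{y}$ is, by definition, a section $\sigma\colon\Spec\kappa(y)\to X_{y}$ of $p$. Setting $g:=q\circ\sigma\colon\Spec\kappa(y)\to X$, the relation $f\circ q=\iota\circ p$ gives $f\circ g=f\circ q\circ\sigma=\iota\circ p\circ\sigma=\iota$, so $g$ is a lift of $\iota$ along $f$. Pulling $\alpha$ back along this factorisation and using functoriality of the Brauer group then yields $\alpha|_{y}=\iota^{*}\alpha=(f\circ g)^{*}\alpha=g^{*}(f^{*}\alpha)=0$, the last equality being the hypothesis $f^{*}\alpha=0$. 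Hence $\alpha$ lies in the kernel of $\Br(Y)\to\Br(\kappa(y))$; for $\alpha\ne 0$ this says precisely that $\langle\alpha\rangle\to\Br(\kappa(y))$ fails to be injective, i.e.\ $y\notin Y[\alpha^{-1}]$.

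I do not expect any genuine obstacle here: the argument is a formal diagram chase valid for arbitrary schemes and an arbitrary (not necessarily torsion) class $\alpha$, and the only point demanding a little care is the bookkeeping with the two projections out of $X_{y}$, so that the identity $f\circ g=\iota$ is correctly recorded. Everything else is functoriality of $\Br(-)$.
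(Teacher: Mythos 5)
Your argument is correct and is exactly the (one-line) proof in the paper, merely spelled out: the $\kappa(y)$-point of the fibre factors $\Spec\kappa(y)\to Y$ through $f$, so $\alpha|_{y}=g^{*}(f^{*}\alpha)=0$. Your parenthetical care about the case $\alpha\ne 0$ is a reasonable observation (the conclusion is vacuously false for $\alpha=0$, which the paper leaves implicit), but otherwise there is nothing to add.
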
 \begin{proof} Since $(f^{\ast}\alpha)|_{X_{y}} = 0$, we have that $\alpha|_{y} = 0$. \end{proof}

\begin{pg}[{Proof of \Cref{0022}}] Let $U \subseteq S$ be an open subset and let $\alpha \in \Br(U)$ be a Brauer class such that the restriction $\alpha|_{K}$ is contained in $\Sh_{\Omega_{S}} \Br(K)$. For every codimension 1 point $s \in U$, we have $\alpha|_{K_{s}} = 0$, so $\alpha|_{\mc{O}_{S,s}^{\wedge}} = 0$ since $\mc{O}_{S,s}^{\wedge}$ is regular; thus $\alpha|_{\kappa(s)} = 0$ since $\mc{O}_{S,s}^{\wedge}$ is henselian. By \Cref{0004}, we have $\alpha = 0$. \end{pg}

\begin{question} Does \Cref{0004} still hold for fields $F$ such that $\trdeg_{k}(F) = 2$? \end{question} \begin{spg} For this, we would need to generalize the Noether-Lefschetz theorem \cite{GRIFFITHSHARRIS-OTNLTASROCTC1985} to separated Deligne-Mumford stacks of dimension $3$ satisfying the conditions of \Cref{0005}. For explicit evidence of an affirmative answer, see the example in \Cref{0021-03}. \end{spg} \begin{spg} As observed in the introduction, it is not enough to assume that $\trdeg_{k}(F) = 1$, for the following reason. Let $k$ be an algebraically closed field and let $F/k$ be a finitely generated field extension such that $\trdeg_{k}(F) = 1$. Let $S$ be a finite type $F$-scheme and let $\alpha \in \Br(S)$ be a Brauer class. For any closed points $s \in S$, we have that $\kappa(s)$ is a $C_{1}$-field, so $\alpha|_{s} = 0$ in $\Br(\kappa(s))$. \end{spg}

\begin{question} Does \Cref{0007} hold for $p$-torsion classes in characteristic $p$? \end{question}

\begin{spg} Let $F$ be a perfect field of characteristic $p$, let $S$ be a finite type $F$-scheme, let $\alpha \in \Br(S)$ be a $p$-torsion Brauer class. For any closed point $s \in S$, the specialization $\alpha|_{s} \in \Br(\kappa(s))$ is a $p$-torsion Brauer class over a perfect field of characteristic $p$, hence in fact $\alpha|_{s} = 0$. This gives one way to construct Brauer classes vanishing at all the closed points of a variety. However, this does not give a counterexample to \Cref{0004} since finitely generated fields of positive transcendence degree are not perfect. \end{spg}

\begin{question} \label{0021} Let $k$ be a perfect field, let $F/k$ be a finitely generated extension of transcendence degree $\trdeg_{k}(F) \ge 3$, let $S$ be a finite type $F$-scheme. Let $G\subset\Br(S)$ be a finite subgroup such that $|G|$ is invertible in $k$, and suppose $s \in S$ is a point such that the composition $G \to \Br(S) \to \Br(\kappa(s))$ is injective. For a given subgroup $G' \subseteq G$, does there exist an infinite set of closed points $s' \in \overline{\{s\}}$ such that the kernel of the composition \[ G \to \Br(S)\to\Br(\kappa(s')) \] is $G'$? \end{question}

\begin{spg} If $G' = 0$, the answer is ``yes'' by \Cref{0004}. \end{spg} \begin{spg} If $G' = G$, the answer is ``yes'' by \Cref{0020} below. A related question was considered by Frei, Hassett, Varilly-Alvarado in \cite{FREIHASSETTVARILLYALVARADO-ROBCOK3SRADE2021}. Namely, given a number field $k$ and a smooth projective $k$-scheme $X$ and a Brauer class $\alpha$, they define $\mc{S}(X,\alpha)$ to be the set of finite places $\mf{p}$ of $k$ such that $X$ has good reduction at $\mf{p}$, $\alpha$ is unramified at $\mf{p}$, and $\alpha|_{X_{\mf{p}}} = 0$ in $\Br(X_{\mf{p}})$. They prove that, if $X$ is a K3 surface such that the transcendental cohomology $T(X)$ satisfies a certain condition, then the set $\mc{S}(X,\alpha)$ has positive natural density. \end{spg}

\begin{spg} \label{0021-03} Here is an example where $G \simeq \Z/n\Z$ and $G' \simeq d\Z/n\Z$ for an arbitrary divisor $d$ of $n$. Let $F := \C(t_{1},t_{2})$ be the function field in two indeterminates, let $A := \Spec F[t_{3}^{\pm}]$ and $S := \Spec A$, and \[ \xi := t_{1}t_{2} \in F \] and let $F(\xi^{1/n})/F$ be the cyclic extension obtained by adjoining an $n$th root of $\xi$. Let $\chi \in \Aut(F(\xi^{1/n})/F)$ be a generator and let \[ \alpha := (S(\xi^{1/n})/S,\chi,t_{3}) \in \Br(S) \] denote the cyclic algebra of degree $n$ over $S$; we set $G := \langle \alpha \rangle$ and $G' := \langle \alpha^{n/d} \rangle$. 

Let $K/F$ be an extension. For any $K$-point $s : \Spec K \to S$ and any divisor $d$ of $n$, the restriction of the $d$th tensor power \[ s^{\ast}\alpha^{\otimes d} \simeq (K(\xi^{d/n})/K,\chi^{n/d},s^{\ast}t_{3}) \in \Br(K) \] is trivial if and only if the norm map \[ \mr{Nm}_{K(\xi^{d/n})/K} : K(\xi^{d/n})^{\times} \to K^{\times} \] contains $s^{\ast}t_{3} \in K^{\times}$ in its image \cite[\S15.1, Lemma]{PIERCE-AA1982}. Thus \Cref{0021} may be rephrased as follows: Does there exist infinitely many closed points $s \in S$ such that, if we denote the residue field of $s$ by $K := \kappa(s)$, the restriction $t_{3}|_{s} \in K^{\times}$ is in the image of $\mr{Nm}_{K(\xi^{d/n})/K}$ but not in the image of $\mr{Nm}_{K(\xi^{1/n})/K}$?

Taking $\{1,\xi^{1/m},\xi^{2/m},\dotsc,\xi^{(m-1)/m}\}$ as our $K$-basis of $K(\xi^{1/m})$, we have \[ \mr{Nm}_{K(\xi^{1/m})/K}(x_{0}1+x_{1}\xi^{1/m}+ \dotsb + x_{m-1}\xi^{(m-1)/m}) = \det M_{\xi,m} \] where $M_{\xi,m} \in \Mat_{m \times m}(F[x_{0},x_{1},\dotsc,x_{m-1}])$ is the matrix whose $(i,j)$th entry is \[ (M_{\xi,m})_{i,j} = \begin{cases} x_{i-j} & \text{if } i \ge j \\ \xi x_{i-j+m} &\text{if } i<j \end{cases} \] for all $1 \le i,j \le m$. For example, we have \begin{align*} &\mr{Nm}_{K(\xi^{1/4})/K}(x_{0}1+x_{1}\xi^{1/4}+x_{2}\xi^{2/4}+x_{3}\xi^{3/4}) \\ &= \det \begin{bmatrix}x_{0} & \xi x_{3} & \xi x_{2} & \xi x_{1} \\ x_{1} & x_{0} & \xi x_{3} & \xi x_{2} \\ x_{2} & x_{1} & x_{0} & \xi x_{3} \\ x_{3} & x_{2} & x_{1} & x_{0} \end{bmatrix} \\ &= (x_{0}^{2}-\xi x_{2}^{2})^{2}- \xi(x_{1}^{2}-\xi x_{3}^{2})^{2}+4\xi(x_{0}x_{1}-x_{2}x_{3}\xi)(x_{1}x_{2}+x_{0}x_{3}) \end{align*} and similarly \[ \mr{Nm}_{K(\xi^{1/2})/K}(y_{0}1+y_{1}\xi^{1/2}) = \det\begin{bmatrix} y_{0} & \xi y_{1} \\ y_{1} & y_{0} \end{bmatrix} = y_{0}^{2}-\xi y_{1}^{2} \] in the $m=4$ and $m=2$ cases, respectively.

Choose a polynomial $f \in \C[t_{1},t_{2}]$ such that $f|_{t_{1}=0} \in \C[t_{2}]$ is not a perfect $d$th power and let $s \in S(F)$ be the $F$-rational point corresponding to $t_{3} - f^{n/d}$. Then taking $(y_{0},y_{1}) = (f,0)$ shows that $t_{3}|_{s}$ is in the image of $\mr{Nm}_{F(\xi^{d/n})/F}$. If $t_{3}|_{s}$ is in the image of $\mr{Nm}_{F(\xi^{1/n})/F}$, then by clearing denominators we obtain polynomials $a_{0},a_{1},\dotsc,a_{n} \in \C[t_{1},t_{2}]$ such that \[ \det M_{\xi,n}(a_{0},a_{1},\dotsc,a_{n-1}) = f^{n/d}a_{n}^{n} \] in $\C[t_{1},t_{2}]$. Since $M_{\xi,n}(a_{0},a_{1},\dotsc,a_{n-1})|_{t_{1} = 0}$ is a lower-triangular matrix, the above simplifies to \[ (a_{0}|_{t_{1}=0})^{n} = (f|_{t_{1}=0})^{n/d}(a_{n}|_{t_{1}=0})^{n} \] in $\C[t_{2}]$ which is a contradiction since $(f|_{t_{1}=0})^{n/d}$ is not an $n$th power in $\C(t_{2})$. \end{spg}

\begin{theorem} \label{0020} Let $F$ be an infinite field, let $S$ be a finite type $F$-scheme of $\dim S \ge 1$, let $G \subseteq \Br(S)$ be a finite subgroup. Then there exist infinitely many codimension 1 points $s \in S^{(1)}$ such that $\alpha|_{s} = 0$ in $\Br(\kappa(s))$ for all $\alpha \in G$. \end{theorem}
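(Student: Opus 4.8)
The plan is to prove the statement by induction on $d := \dim S$. Since we only need \emph{infinitely many} codimension $1$ points, we may first replace $S$ by the reduction of an irreducible component of dimension $d$, and then by a dense affine open on which it is regular; this changes neither the residue fields of the points we shall produce nor their codimension in the original $S$, and $G$ restricts to a finite subgroup of $\Br$ of the new scheme vanishing exactly where the original does. So at each stage we may assume $S$ is integral, affine, regular, of dimension $d \ge 1$, with $G$ generated by classes $\alpha_1,\dotsc,\alpha_m \in \Br(S)$.

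\emph{Inductive step ($d \ge 2$).} Choose a nonconstant morphism $\phi\colon S \to \A^1_F$ — possible because $\Gamma(S,\mc{O}_S)$ contains an element transcendental over $F$ — and put $L := \kappa(\eta_{\A^1_F}) = F(t)$, again an infinite field. By generic flatness the generic fibre $S_L := \phi^{-1}(\eta_{\A^1_F})$ is equidimensional of dimension $d-1$, so if $C$ is the reduction of one of its irreducible components then $C$ is integral of dimension $d-1 \ge 1$ over $L$. Applying the inductive hypothesis to $C$ over $L$ with the finite subgroup $\bar G \subseteq \Br(C)$ generated by the images of the $\alpha_i$ produces infinitely many $\sigma \in C^{(1)}$ with $\bar G|_\sigma = 0$. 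Viewing each such $\sigma$ as a point of $S$ via the monomorphism $S_L \to S$, we have $L \subseteq \kappa(\sigma)$, hence $\trdeg_F \kappa(\sigma) = 1 + \trdeg_L \kappa(\sigma) = 1 + (d-2) = d-1$, so $\sigma \in S^{(1)}$; and $G|_\sigma = \bar G|_\sigma = 0$. Distinct $\sigma$'s give distinct points, completing the step and reducing us to the case $d = 1$.

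\emph{Base case ($d=1$): setup.} Now $S = C$ is an integral regular affine curve over an infinite field $L$, with function field $K$. Since every central simple algebra over $K$ has a separable splitting field (a separable maximal subfield of a representing division algebra), we may choose a finite \emph{separable} extension $K'/K$ splitting all of $\alpha_1|_K,\dotsc,\alpha_m|_K$. If $K'=K$, then $G|_K = 0$, each $\alpha_i$ is split on a dense open of $C$ — which still has infinitely many closed points — and we are done. Otherwise write $K'=K(\theta)$ with $\theta$ separable over $K$ of degree $n:=[K':K]\ge 2$; after replacing $\theta$ by a suitable translate $\theta + v$, $v \in \Gamma(C,\mc{O}_C)$ nonconstant, we may also assume $\theta$ is transcendental over $L$ (this affects neither primitivity nor separability). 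Let $g \in K[x]$ be the minimal polynomial of $\theta$; after shrinking $C$ we may take $g$ monic with coefficients in $\Gamma(C,\mc{O}_C)$, so that $D := \Spec_C\big(\mc{O}_C[x]/(g)\big) \subseteq C \times_L \A^1_x$ is integral (using that $\Gamma(C,\mc{O}_C)$ is normal) and finite of degree $n$ over $C$, say via $f\colon D \to C$, with function field $K'$. Removing from $C$ the image of the finite singular locus of $D$, we may assume $D$ is regular; then $\Br(D)\hookrightarrow\Br(K')$, and since $K'$ splits each $\alpha_i$ this forces $f^{\ast}\alpha_i = 0$ in $\Br(D)$ for all $i$.

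\emph{Base case: slicing, and the main obstacle.} On $D$ the coordinate $x$ restricts to $\theta$, which is transcendental over $L$, so $x|_D\colon D \to \A^1_L$ is dominant; hence $x|_D^{-1}(a)$ is a nonempty finite set of closed points of $D$ for all but finitely many $a \in L$, and as $L$ is infinite the set $P := \{\, p \in D \text{ closed} : x(p) \in L \,\}$ is infinite. For $p \in P$ with image $s := f(p)$ (a closed point, since $f$ is finite) one has $\kappa(p) = \kappa(s)(x(p)) = \kappa(s)$, because $x(p) \in L \subseteq \kappa(s)$; thus the fibre $f^{-1}(s)$ contains the $\kappa(s)$-point $p$, and since $f^{\ast}\alpha_i = 0$, \Cref{0013} yields $\alpha_i|_s = 0$ for all $i$, i.e.\ $G|_s = 0$. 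As $f$ has finite fibres, $f(P)$ is an infinite set of closed points of $C$, each lying in $C^{(1)} = S^{(1)}$ with $G|_s=0$. I expect this last step to be the main obstacle — everything else being formal: a naive splitting variety such as a Severi–Brauer scheme is useless here, its fibre over $s$ acquiring a $\kappa(s)$-point precisely when $G|_s=0$, so one must instead arrange a ``horizontal'' splitting cover $D \to C$ on which the chosen primitive element is an honest transcendental function, and then slice by constant values to manufacture residue-field-preserving points, exploiting that $L$ is infinite. This is the mechanism of \Cref{0002}, where $D\to C$ is the squaring map of $\G_m$. A secondary technical point is to check, as done above via transcendence degrees, that the reductions and the passage to generic fibres genuinely preserve ``codimension $1$''.
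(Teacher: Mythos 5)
Your proof is correct, and its engine is the same as the paper's: the paper deduces \Cref{0020} from \Cref{0018}, whose proof is exactly your base-case mechanism --- pass to a generically finite \emph{separable} cover, write it via a primitive element $a$, translate $a$ so that it is not integral (for you: not algebraic) over the ground field, map to $\A^1$, and slice by rational values of the coordinate to manufacture residue-field-preserving points (your $\kappa(p)=\kappa(s)$ computation is \Cref{0017}, and the final vanishing is \Cref{0013}, just as in the paper). Two organizational differences are worth noting. First, you dismiss Severi--Brauer schemes as ``useless,'' but the paper does use them: it forms $Y:=Y_1\times_S\dotsb\times_S Y_n\to S$ and then, inside the proof of \Cref{0018}, replaces $Y$ by the closure of a separable closed point of the generic fiber --- which is precisely your splitting cover $D\to C$. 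The Severi--Brauer scheme is merely a geometric package for ``there exists a finite separable splitting field,'' so the two constructions coincide after one step. Second, the paper's \Cref{0018} operates in arbitrary dimension at once: the flat map $g:X\to\A^1_k$ slices out codimension-one loci of $S$ directly, with no reduction to curves; you instead reduce to $\dim S=1$ by induction, replacing $F$ by $F(t)$ at each step and recovering codimension $1$ in $S$ by a transcendence-degree count. Both routes are sound; yours trades the paper's single dimension argument for an induction whose only delicate point --- that generic-fiber codimension $1$ over $F(t)$ gives codimension $1$ over $F$ --- you verify correctly.
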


\begin{proof} Let $\alpha_{1},\dotsc,\alpha_{n}$ be the elements of $G$, and for all $1\le i \le n$, let $Y_{i} \to S$ be a Brauer-Severi scheme corresponding to $\alpha_{i}$. Set \[ Y := Y_{1} \times_{S} \dotsb \times_{S} Y_{n} \] and let $f : Y \to S$ be the structure morphism. By \Cref{0018}, there exist infinitely many $s \in S^{(1)}$ such that the fiber \[ f^{-1}(s) = Y \times_{S} \Spec \kappa(s) \to \Spec \kappa(s) \] admits a section; for such $s$, we have $\alpha_{i}|_{s} = 0$ for all $1 \le i \le n$ as required. \end{proof}

\begin{lemma} \label{0017} Let $k$ be a field, let $X,Y$ be finite type $k$-schemes, let $\pi_{X} : X \times_{k} Y \to X$ and $\pi_{Y} : X \times_{k} Y \to Y$ be the two projections. For any point $z \in X \times_{k} Y$ such that $\pi_{Y}(z)$ is a $k$-point of $Y$, the extension of residue fields $\kappa(\pi_{X}(z)) \to \kappa(z)$ is an isomorphism. \end{lemma}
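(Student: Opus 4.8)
The plan is to exhibit the scheme-theoretic fiber of $\pi_{Y}$ over the $k$-point $y := \pi_{Y}(z)$ as a copy of $X$ sitting inside $X \times_{k} Y$ as a closed subscheme, and to observe that $\pi_{X}$ restricts to the identity on this copy.

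First I would reduce to the case where $X = \Spec A$ and $Y = \Spec B$ are affine: choosing affine open neighborhoods of $\pi_{X}(z)$ in $X$ and of $\pi_{Y}(z)$ in $Y$ and replacing $X, Y$ by these changes neither $\kappa(\pi_{X}(z))$, nor $\kappa(\pi_{Y}(z)) = k$, nor $\kappa(z)$. The $k$-point $y$ then corresponds to a prime $\mf{q} \subset B$ with $B/\mf{q} = k$, and the quotient map $B \to B/\mf{q} = k$ is a $k$-algebra retraction of the structure map $k \to B$. Tensoring with $A$ over $k$ gives a surjection of $k$-algebras $A \otimes_{k} B \to A \otimes_{k} k = A$, hence a closed immersion $\iota : \Spec A \hookrightarrow \Spec(A \otimes_{k} B) = X \times_{k} Y$. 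Concretely $\iota$ is the graph of the constant morphism $X \to Y$ with value $y$, so in particular $\pi_{X} \circ \iota = \mathrm{id}_{X}$.

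Next I would check that $z$ lies in the image of $\iota$. The point $z$ corresponds to a prime $\mf{r} \subset A \otimes_{k} B$ whose contraction along $B \to A \otimes_{k} B$, $b \mapsto 1 \otimes b$, equals $\mf{q}$; in particular $1 \otimes q \in \mf{r}$ for every $q \in \mf{q}$, so $\mf{r}$ contains the kernel $A \otimes_{k} \mf{q}$ of $A \otimes_{k} B \to A$. Thus $\mf{r}$ is the preimage of a prime $\bar{\mf{r}} \subset A$, and since $A \otimes_{k} B \to A$ is surjective the induced map on residue fields $\kappa(z) = \kappa(\mf{r}) \to \kappa(\bar{\mf{r}})$ is an isomorphism. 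Chasing $\pi_{X} \circ \iota = \mathrm{id}_{X}$ shows $\bar{\mf{r}} = \mf{r} \cap A$ is exactly the prime $\mf{p}$ corresponding to $\pi_{X}(z)$, and that the residue-field map induced by $\pi_{X}$ is precisely the inverse of the isomorphism just produced; hence $\kappa(\pi_{X}(z)) \to \kappa(z)$ is an isomorphism.

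There is no serious obstacle here: the only point needing a little care is the bookkeeping that the abstract isomorphism $\kappa(z) \cong \kappa(\bar{\mf{r}})$ coming from the closed immersion $\iota$ is compatible with the map $\kappa(\pi_{X}(z)) \to \kappa(z)$ one actually wants to control, which is taken care of by $\pi_{X} \circ \iota = \mathrm{id}_{X}$. One can also phrase the whole argument scheme-theoretically and basis-free: the fiber $(X \times_{k} Y) \times_{Y} \Spec \kappa(y) \cong X \times_{k} \Spec \kappa(y) = X$, the inclusion of the fiber over a $k$-rational point is a closed immersion (base change of $\Spec k \hookrightarrow Y$), closed immersions induce isomorphisms on residue fields, and $\pi_{X}$ restricts to $\mathrm{id}_{X}$ on this fiber.
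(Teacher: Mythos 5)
Your argument is correct and is essentially the paper's proof written out in full detail: the paper simply observes that the $k$-point $\pi_{Y}(z)$ induces a section $\sigma : X \to X \times_{k} Y$ of $\pi_{X}$ sending $\pi_{X}(z)$ to $z$, which is exactly your closed immersion $\iota$. The affine reduction and the prime-ideal bookkeeping you carry out are just the verification that $z$ lies in the image of this section and that the residue-field maps are mutually inverse, which the paper leaves implicit.
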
 \begin{proof} The $k$-point $\pi_{Y}(z) : \Spec k \to Y$ induces a section $\sigma : X \to X \times_{k} Y$ of $\pi_{X}$ which sends $\pi_{X}(z) \mapsto z$. \end{proof}

\begin{lemma} \label{0018} Let $k$ be an infinite field, let $X,Y$ be integral, finite type $k$-schemes with $\dim X \ge \dim Y \ge 1$, and let $f : X \to Y$ be a generically smooth $k$-morphism. Then there exist infinitely many points $y \in Y$ of codimension 1 such that the fiber \[ f^{-1}(y) = X \times_{Y} \Spec \kappa(y) \to \Spec \kappa(y) \] admits a section. \end{lemma}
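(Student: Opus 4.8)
The plan is to peel the statement down to the case where $f$ is generically finite, and then to produce the required codimension-$1$ points of $Y$ explicitly, as components of the zero loci of a primitive element for the function field extension $\kappa(\eta_X)/\kappa(\eta_Y)$.

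First I would use generic smoothness to replace $X$ and $Y$ by dense open subschemes, still integral, over which $f$ is smooth and surjective (a smooth morphism is open, so $f$ is dominant, and one may then shrink $Y$ to the image); a codimension-$1$ point of the new $Y$ is one of the old, so nothing is lost. Set $K := \kappa(\eta_Y)$. The generic fibre $X_{\eta_Y}$ is now a nonempty smooth $K$-scheme of dimension $r := \dim X - \dim Y$. If $r \ge 1$, it has a point over a finite separable extension $M/K$ (a nonempty smooth scheme over a field has a point over a finite separable extension of that field); I let $Y'$ be the normalization of $Y$ in $M$, so that $g : Y' \to Y$ is finite and generically separable with $\dim Y' = \dim Y$, and I spread the chosen point out to a rational $Y$-map $\tilde g : Y' \dashrightarrow X$, defined on a dense open $Y'_0 \subseteq Y'$. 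If $r = 0$ I simply take $g := f$, $Y' := X$, $\tilde g := \mathrm{id}$. In all cases: if $v$ is a codimension-$1$ point of $Y$ over which $Y'$ has a $\kappa(v)$-point witnessed by a point $y' \in Y'_0$, then pushing it forward by $\tilde g$ yields a $\kappa(v)$-point of $X_v$, since the composite $\kappa(v) \to \kappa(\tilde g(y')) \to \kappa(y') = \kappa(v)$ of the maps induced by $f$ and $\tilde g$ is the identity and hence forces $\kappa(\tilde g(y')) = \kappa(v)$. As $Y' \setminus Y'_0$ has only finitely many codimension-$1$ points, it therefore suffices to find infinitely many codimension-$1$ points of $Y$ over which the generically finite, generically separable morphism $g : Y' \to Y$ acquires a rational point.

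So assume $f : X \to Y$ is generically finite and generically separable with $\dim X = \dim Y = d \ge 1$; if the degree is $1$ we are done by birationality, so let $n \ge 2$. Shrink $Y$ to an affine open $\Spec R$ and $X$ to its preimage, arranged (by clearing denominators in a primitive element $\theta$ of $\kappa(\eta_X)/\kappa(\eta_Y)$) so that $X = \Spec R[T]/(\mu)$ with $\mu \in R[T]$ monic of degree $n$, $\theta$ the class of $T$, and $f$ finite. The crucial observation is that for $c \in k$ the closed subscheme $\theta^{-1}(c) = V(T-c) \subseteq X$ equals $\Spec R[T]/(\mu, T-c) = \Spec R/(\mu(c))$, and that $f$ restricts on it to the \emph{closed immersion} $\Spec R/(\mu(c)) \hookrightarrow \Spec R = Y$. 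Because $\mu$ is irreducible of degree $\ge 2$ over $\kappa(\eta_Y)$, one has $\mu(c) \ne 0$ in $R$, and a dimension count in the incidence locus $\{(y,c) \in Y \times \mathbb{A}^1_k : \mu(c)(y)=0\}$ (each of whose components dominates the $c$-line) shows that for all but finitely many $c$ the function $\mu(c)$ is also a non-unit; for such $c$ the scheme $V(\mu(c)) \subseteq Y$ is nonempty and pure of codimension $1$ by Krull's Hauptidealsatz. Each of its components $C$ gives a codimension-$1$ point $v = \eta_C$ together with a point of $X$ over $v$ with residue field $\kappa(v)$, namely the generic point of the matching component of $\theta^{-1}(c)$, which maps isomorphically to $C$; so $v$ is one of the desired points. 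Finally, a fixed height-$1$ prime $\mathfrak{p}$ of $R$ can occur this way only for the (at most $n$) roots $c \in k$ of $\mu \bmod \mathfrak{p}$, so letting $c$ range over the infinitely many admissible elements of $k$ — this is where the infinitude of $k$ enters — produces infinitely many distinct codimension-$1$ points $v$.

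The main obstacle is exactly the generically finite case of degree $n \ge 2$: here $f$ has no rational section to spread out, and asking for codimension-$1$ points of $Y$ over which the fibre becomes rational is a Chebotarev-type condition for which no density theorem is available over an arbitrary infinite ground field. The device that resolves it is to stop looking for sections of $f$ and instead cut the needed divisors directly out of $Y$: since $\theta^{-1}(c)$ is defined by the single equation $T=c$, it maps isomorphically onto a closed subscheme of $Y$, so every prime divisor appearing in $V(\mu(c))$ is split essentially by construction. The remaining verifications — that the reductions preserve integrality and the dimension inequality, that ``all but finitely many $c$'' is legitimate, and that the bookkeeping between $Y$, its affine shrinkings, $Y'$ and $Y'_0$ goes through — are routine.
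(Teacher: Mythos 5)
Your overall strategy matches the paper's: reduce to a generically finite, generically separable morphism by picking a closed point of the smooth generic fibre with separable residue field, then present $X$ as $\Spec R[T]/(\mu)$ via a primitive element $\theta$ and harvest codimension-$1$ points of $Y$ from the loci $V(\mu(c))$ for $c \in k$, onto which $V(T-c) \subset X$ maps isomorphically. The reduction, the splitting mechanism, and the final count (a fixed height-one prime of $R$ arises from at most $n$ values of $c$) are all fine.

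The gap is the parenthetical claim that every component of the incidence locus $\{(y,c) : \mu(c)(y)=0\}$ dominates the $c$-line. That locus is $V(\mu(T)) \subseteq \Spec R[T]$, i.e.\ it is $X$ itself, and its projection to $\A^{1}_{k}$ is the function $\theta$; this is dominant if and only if $\theta$ is transcendental over $k$, which nothing in your construction guarantees. It genuinely fails: take $Y = \A^{1}_{\Q} = \Spec \Q[s]$ and $X = \Spec \Q[s][T]/(T^{2}+1) = \A^{1}_{\Q(i)}$, with $\theta = i$ and $\mu = T^{2}+1$. Then $\mu(c) = c^{2}+1$ is a unit in $\Q[s]$ for \emph{every} $c \in \Q$, so $V(\mu(c)) = \emptyset$ always and your construction produces no points at all, even though the lemma is true here (witnessed by $V(s^{2}+1)$, $V(s^{2}+4)$, etc.). The missing step — which is exactly the pivot of the paper's proof — is to first replace $\theta$ by $\theta + b$ for suitable $b \in R$ so that $\theta$ is not algebraic over $k$: the integral closure $k'$ of $k$ in $\Gamma(X,\mc{O}_{X})$ is a finite extension of $k$, while $R$ is infinite-dimensional over $k$ because $\dim Y \ge 1$, so some translate lies outside $k'$. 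After this normalization $k[T] \to R[T]/(\mu)$ is injective, hence $\theta : X \to \A^{1}_{k}$ is flat with open image; since $k$ is infinite that image contains infinitely many $k$-points $c$, and for each such $c$ the element $\mu(c)$ is a non-unit. With that adjustment the rest of your argument goes through.
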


\begin{proof} Let $\eta_{X}\in X$ and $\eta_{Y} \in Y$ denote the generic points. If $\dim X > \dim Y$, by \cite[056U]{SP} we may choose a closed point $x \in f^{-1}(\eta_{Y})$ such that $\kappa(x)/\kappa(\eta_{Y})$ is finite separable. By replacing $X$ by the reduced scheme $\overline{\{x\}}$, we may assume that $\dim X = \dim Y$ and that the function field extension $\kappa(\eta_{X})/\kappa(\eta_{Y})$ is a finite separable extension.

We may replace $X$ and $Y$ by open subschemes so that $X$ and $Y$ are affine. Set $A := \Gamma(X,\mc{O}_{X})$ and $B := \Gamma(Y,\mc{O}_{Y})$; after a further localization, we may assume (by the primitive element theorem) that $A = B[a]$ for some $a \in A$ which is integral over $B$. Let $\phi \in B[t]$ be the minimal polynomial of $a$ over $B$. If $\deg \phi = 1$, then every fiber of $f$ is trivial, so we may assume $\deg \phi \ge 2$. Let $k' \subseteq A$ be the integral closure of $k$ in $A$; then $k'/k$ is a finite extension by \cite[3.3.2]{HUNEKESWANSON-ICOIRAM2006}. Since $\dim(B) \ge 1$, we have $\dim_{k}(B) = \infty$ so we may choose some $b \in B$ such that $a+b \not\in k'$. By replacing $a$ by $a+b$, we may assume that $a \not\in k'$, i.e. $\phi$ has a root which is not integral over $k$, so $\phi$ does not divide any nonzero element of $k[t]$. This implies the composition \[ k[t] \to B[t] \to B[t]/(\phi) \simeq A \] is injective; hence the corresponding morphism \[ g : X \to \A_{k}^{1} \times_{k} Y \to \A_{k}^{1} \] is flat. Thus the set-theoretic image $U := g(X) \subseteq \A_{k}^{1}$ is an open subset of $\A_{k}^{1}$. Since $k$ is infinite, the open subset $U$ contains infinitely many $k$-points. For any $x \in X$ such that $g(x)$ is a $k$-point of $U$, the extension of residue fields $\kappa(f(x)) \to \kappa(x)$ is an isomorphism by \Cref{0017}. \end{proof}

\section{Brauer classes vanishing at a prescribed set of points} \label{sec-03}

\Cref{0004} and \Cref{0007} show that too many vanishing specializations of a Brauer class kill it. In this section, we consider the problem of constructing non-zero Brauer classes that vanish at a given finite set of points (i.e. whether there always exist $\alpha \in \Br(S)$ such that $S[\alpha^{-1}]$ avoids an arbitrary set of points).

\begin{question} \label{0003} Let $k$ be a field, let $S$ be a curve over $k$. Let $T_{1},T_{2} \subseteq S$ be disjoint subsets of $S$. Does there exist a Brauer class $\alpha \in \Br(S)$ such that $T_{1} \subseteq S[\alpha^{-1}]$ and $T_{2} \cap S[\alpha^{-1}] = \emptyset$? \end{question}

\begin{spg} In \Cref{sec-02}, we are mostly interested in whether $S[\alpha^{-1}]$ is stable under specialization. We may ask whether $S[\alpha^{-1}]$ is stable under generalization as well. If $s \in S$ is a point such that the reduced subscheme $\overline{\{s\}} \subset S$ is regular, then $\alpha|_{s} = 0$ implies $\alpha|_{s'} = 0$ for all $s' \in \overline{\{s\}}$. On the other hand, if $\overline{\{s\}}$ is not regular, it is possible that $\alpha|_{s} = 0$ but $\alpha|_{s'} \ne 0$ for some $s' \in \overline{\{s\}}$: \par Set $A := (\Q[x,y]/(x^{2} - y^{3} + 2y^{2}))_{\langle x,y \rangle}$; then $A$ is a local Noetherian domain of dimension 1 and a normalization is given by the map $A \to \Q[t]_{\langle t \rangle}$ sending $x \mapsto t(t^{2}+2)$ and $y \mapsto t^{2}+2$. Set $S := \Spec A$ with generic point $s$ and closed point $s'$, and consider the quaternion algebra $\alpha := (y-1,y-1) \in \Br(S)$. We have $\alpha|_{s'} \ne 0$ since the conic $-X_{0}^{2}-X_{1}^{2}=X_{2}^{2}$ does not have a nontrivial solution over $\kappa(s') \simeq \Q$; however $\alpha|_{s} = 0$ since the conic $(t^{2}+1)X_{0}^{2}+(t^{2}+1)X_{1}^{2} = X_{2}^{2}$ has the solution $(X_{0},X_{1},X_{2}) = (t,1,t^{2}+1)$ over $\kappa(s) \simeq \Q(t)$. \end{spg}

\begin{spg} In \Cref{0011}, we give a positive answer to a weaker version of \Cref{0003} where we allow $\alpha$ to be ramified on $S$. In \Cref{0012}, we prove the existence of unramified Brauer classes for the case $|T| = 1$. \end{spg}

\begin{proposition} \label{0011} Let $k$ be a Hilbertian field, let $S$ be a smooth proper curve over $k$. For any finite set of closed points $T \subseteq S_{(0)}$ and any positive integer $n$, there exists an open subscheme $S'$ of $S$ such that $T \subseteq S'$ and a Brauer class $\alpha \in \Br(S')$ such that $\eta_{S'} \in S'[\alpha^{-1}]$ and $T \cap S'[\alpha^{-1}] = \emptyset$ and $\gcd(\per \alpha,n) = 1$. \end{proposition}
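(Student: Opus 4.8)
The plan is to produce $\alpha$ as a cyclic algebra whose symbol is built from a cleverly chosen rational function on $S$, exploiting the Hilbertian hypothesis on $k$ to separate the values at $T$ from the value at $\eta_{S}$. Concretely, pick a prime $\ell$ coprime to $n$ (and invertible in $k$; if $k$ has characteristic $\ell$ we replace $\ell$ by another such prime, which is possible since we only need one). The aim is to find a nonconstant rational function $h \in \kappa(\eta_{S})^{\times}$ and a degree-$\ell$ cyclic extension $L/\kappa(\eta_{S})$ such that the symbol $\alpha := (L/\kappa(\eta_{S}), h)$ has period exactly $\ell$ at the generic point but becomes trivial after specialization to every point of $T$; then we take $S'$ to be a suitable open subscheme containing $T$ on which $\alpha$ is defined (i.e.\ we remove the poles/zeros of $h$ and the ramification of $L$ that do not meet $T$).

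First I would arrange the cyclic extension. Choose a nonconstant $g \in \kappa(\eta_{S})^{\times}$ which is an $\ell$th power in each completed local ring $\mc{O}_{S,t}^{\wedge}$ for $t \in T$ --- e.g.\ take $g$ to be a function vanishing to order divisible by $\ell$ at each point of $T$, or more simply $g = u^{\ell}$ times something with this property; then $L := \kappa(\eta_{S})(g^{1/\ell})/\kappa(\eta_{S})$ is split at every $t \in T$, so that $\alpha|_{t}$ is automatically trivial regardless of the second slot $h$. The nontriviality at $\eta_{S}$ then needs $(L/\kappa(\eta_{S}), h)$ to have period $\ell$ for some choice of $h$: since $L/\kappa(\eta_{S})$ is a nontrivial $\Z/\ell$-extension of the function field of a curve over $k$, one knows $\Br(\kappa(\eta_{S}))$ contains classes of period $\ell$ split by $L$ (for instance pick a closed point $v$ of $S$ inert in $L$ and a uniformizer $h$ there, so that the residue of $\alpha$ at $v$ generates $\frac{1}{\ell}\Z/\Z$ in $\H^{1}(\kappa(v),\Z/\ell)$ and hence $\alpha \ne 0$). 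The Hilbertian hypothesis enters here to guarantee that after choosing $g$ we can still find such a point $v$ avoiding $T$ together with the genericity conditions: apply the geometric form of Hilbertianity to the $\Z/\ell$-cover $S_{L} \to S$ determined by $L$ (after shrinking $S$ to an open subset where it is étale) to get infinitely many closed points of $S$ with irreducible, i.e.\ inert, fiber, and discard the finitely many lying in $T$.

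The main obstacle I anticipate is ensuring simultaneously (a) that $g$ is an $\ell$th power locally at every $t \in T$, (b) that the resulting $\Z/\ell$-cover is genuinely nontrivial (so that $L \ne \kappa(\eta_{S})$ and Hilbertianity has content), and (c) that $h$ can be chosen with the right ramification at some auxiliary point $v \notin T$ without wrecking the local triviality at $T$ --- but since $h$ only affects $\alpha$ through the symbol and $L$ is already split at $T$, condition (c) is actually free once (a) holds. So the real work is a local-global interpolation: constructing $g \in \kappa(\eta_{S})^{\times}$ that is a nonconstant non-$\ell$th-power globally yet an $\ell$th power in each $\mc{O}_{S,t}^{\wedge}$, which is a weak approximation statement on the smooth proper curve $S$ and follows from the fact that $S(k)$-local conditions at finitely many closed points impose only finitely many constraints on the (infinite-dimensional, since $\deg$ is unbounded) space of rational functions; one then checks non-$\ell$th-power-ness by looking at the divisor modulo $\ell$ at a point outside $T$. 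Finally, having $\alpha \in \Br(\kappa(\eta_{S}))$, one spreads it out: it is defined on the complement of its ramification locus, which is a finite set of closed points disjoint from $T$ by construction, and this complement is the desired $S'$. The conditions $\eta_{S'} \in S'[\alpha^{-1}]$, $T \cap S'[\alpha^{-1}] = \emptyset$, and $\gcd(\per\alpha, n) = \gcd(\ell, n) = 1$ are then immediate.
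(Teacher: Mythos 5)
Your overall architecture (a cyclic algebra whose symbol is trivialized at $T$ by a local condition and detected at the generic point by a ramification/degree argument) is in the right spirit, but there is a genuine gap at the very first step: the extension $L = \kappa(\eta_S)(g^{1/\ell})$ is Galois over $\kappa(\eta_S)$ only if $\m_\ell \subseteq \kappa(\eta_S)$, i.e.\ only if the constant field contains a primitive $\ell$th root of unity. For a general Hilbertian field (say $k=\Q$) and an odd prime $\ell$, $K(g^{1/\ell})/K$ is a non-normal degree-$\ell$ extension, so the ``cyclic algebra $(L/\kappa(\eta_S),h)$'' is simply not defined, and there is no symbol $(g,h)_\ell$ either. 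You cannot always retreat to $\ell=2$, since $\ell$ must be coprime to the given $n$, and $n$ may be even. So the object your proof manipulates does not exist in the generality required, and none of the later steps (residue at an inert point, Hilbertian search for inert fibers) can start. This is precisely the point where the paper uses the Hilbertian hypothesis for a different purpose than you do: by a theorem of Fried--Jarden, a Hilbertian field admits a cyclic Galois extension $k'/k$ of any prescribed degree $m$ with $\gcd(m,n)=1$, with no root-of-unity hypothesis. The paper then puts the ``trivial at $T$'' condition into the \emph{second} slot rather than the first: after reducing to $S=\P^1_k$ by pulling back along a finite map (adjusting $n$ by $\deg f$), it forms $(k'(t)/k(t),\chi,t^i\xi+1)$ where $\xi$ vanishes on $T$, so the slot equals $1$ at every point of $T$; nontriviality is a degree count showing $t^i\xi+1$ is not a norm from $k'(t)$. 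Your dual strategy (splitting the extension at $T$ and leaving the slot free) could in principle work, but you would still need a genuinely cyclic $L/K$ of degree $\ell$ split at all of $T$, and producing one is not easier than the paper's route.

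Two smaller points. First, your parenthetical recipe for $g$ is wrong as stated: a function vanishing to order divisible by $\ell$ at $t$ is $u\pi^{\ell}$ with $u$ a unit, and this is an $\ell$th power in $\mc{O}_{S,t}^{\wedge}$ only if $u$ is; the correct condition is $g \equiv 1 \bmod \mf{m}_t$ (then Hensel's lemma applies since $\ell$ is invertible), which you can arrange by Riemann--Roch. Second, your use of Hilbertianity to find an inert point $v$ is not needed for nontriviality if you instead control residues directly (the paper's norm-degree argument uses no Hilbertianity at that stage); the hypothesis is genuinely consumed in producing the cyclic extension, which is the step your proposal is missing.
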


\begin{proof} Let $S_{1},S_{2}$ be smooth proper curves over $k$, let $f : S_{1} \to S_{2}$ be a finite $k$-morphism, let $T_{1} \subseteq (S_{1})_{(0)}$ be a set of closed points, and set $T_{2} := f(T_{1})$. Suppose $S_{2}' \subseteq S_{2}$ is an open subset containing $T_{2}$ and $\alpha_{2} \in \Br(S_{2}')$ is a Brauer class such that $\eta_{S_{2}'} \in S_{2}'[\alpha_{2}^{-1}]$ and $T_{2} \cap S_{2}'[\alpha_{2}^{-1}] = \emptyset$. Then for $S_{1}' := f^{-1}(S_{2}')$ and $\alpha_{1} := f^{\ast}\alpha_{2} \in \Br(S_{1}')$, we have $T_{1} \cap S_{1}'[\alpha_{1}^{-1}] = \emptyset$. In this setup, we have \[ \per \alpha_{1} | \per \alpha_{2} | (\deg f)\per \alpha_{1} \] so $\gcd(\per \alpha_{2},n \deg f) = 1$ implies $\per \alpha_{1} = \per \alpha_{2}$.

By the above, we may reduce to the case $S = \P_{k}^{1}$ by choosing a finite $k$-morphism $f : S \to \P_{k}^{1}$ and replacing $S,T,n$ by $\P_{k}^{1},f(T),n \deg f$, respectively.

By a translation of $\P_{k}^{1}$, we may assume that $0,\infty \not\in T$. For any $s \in T$, let $\xi_{s} \in k[t]$ be the monic irreducible polynomial defining the closed subscheme $\Spec \kappa(s) \to \P_{k}^{1}$ and set $\xi := \prod_{s \in T} \xi_{s}$. Choose a positive integer $m \ge 2$ such that $\gcd(m,n) = 1$. Since $K$ is Hilbertian, by \cite[16.3.6]{FRIED-JARDEN-FIELDARITHMETIC} there exists a cyclic Galois extension $k'/k$ of degree $[k':k] = m$. Choose a positive integer $i \in \Z_{\ge 1}$ such that $m \nmid i+\deg \xi$, let $\chi \in \Aut(k'/k)$ be a generator. We show that the Brauer class $\alpha \in \Br(k(t))$ corresponding to the cyclic algebra \[ \mc{A} := (k'(t)/k(t),\chi,t^{i}\xi + 1) \] has the desired properties. To show that $\mc{A}$ is nontrivial, it suffices by \cite[\S15.1, Lemma]{PIERCE-AA1982} to show that the unit $t^{i}\xi+1 \in (k(t))^{\times}$ is not equal to the norm of any element of $(k'(t))^{\times}$. For this, we note that for any $\frac{a}{b} \in (k'(t))^{\times}$, the norm $N_{k'(t)/k(t)}(\frac{a}{b}) = \prod_{\sigma \in \Aut(k'/k)} \sigma(\frac{a}{b})$ has degree $m (\deg(a) - \deg(b))$, but $\deg(t^{i}\xi + 1)$ is not a multiple of $m$ (by choice of $i$). For any closed point $s \in T$, we have that $\mc{A}$ is unramified at $s$ (since $\gcd(\xi_{s},t^{i}\xi+1) = 1$) and the restriction \[ \mc{A}|_{s} = (k' \otimes_{k} \kappa(s)/\kappa(s),\chi,(t^{i}\xi + 1)|_{\kappa(s)}) \simeq (k' \otimes_{k} \kappa(s)/\kappa(s),\chi,1) \] is a trivial central simple $\kappa(s)$-algebra. \end{proof}

\begin{proposition} \label{0012} Let $k$ be a field that is finitely generated over a global field, let $S$ be a smooth proper geometrically integral curve over $k$ of genus $g \ge 1$. For any closed point $x \in S_{(0)}$, there exists a Brauer class $\alpha \in \Br(S)$ such that $\eta_{S} \in S[\alpha^{-1}]$ and $x \not\in S[\alpha^{-1}]$.  \end{proposition}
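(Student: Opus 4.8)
The plan is to reduce, using smoothness, to producing a nonzero class $\alpha\in\Br(S)$ with $\alpha|_x=0$ in $\Br(\kappa(x))$, and then to split into the cases $\kappa(x)=k$ and $\kappa(x)\supsetneq k$. Write $K:=\kappa(\eta_S)$, $L:=\kappa(x)$ and $J:=\Jac(S)$. Since $S$ is a smooth curve it is regular, so $\Br(S)\hookrightarrow\Br(K)$; hence $\eta_S\in S[\alpha^{-1}]$ is equivalent to $\alpha\neq 0$ in $\Br(S)$, and $x\notin S[\alpha^{-1}]$ means precisely $\alpha|_x=0$. The basic tool is the Leray spectral sequence with coefficients in $\G_m$ for a smooth proper geometrically integral curve $C$ over a field $E$: since $\Br(\overline{C})=0$ by Tsen, there is an exact sequence $0\to\Br(E)/N_C\to\Br(C)\to\H^1(E,\Pic(\overline{C}))$ in which $N_C:=\ker(\pi_C^{*}\colon\Br(E)\to\Br(C))$ is the image of the transgression $(\Pic\overline{C})^{G_E}\to\Br(E)$. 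Because $(\Pic\overline{C})^{G_E}$ is an extension of a subgroup of $\Z$ by $\Jac(C)(E)$, and $\Jac(C)(E)$ is finitely generated (Mordell--Weil--Lang--N\'eron, $E$ being finitely generated over the prime field), while $\Br(E)$ is torsion, the group $N_C$ is finite. If moreover $C$ has an $E$-rational point $x_0$, then $x_0^{*}$ splits $\pi_C^{*}$, so $N_C=0$, the edge map $\Br(C)\to\H^1(E,\Pic(\overline{C}))$ is surjective, and $\ker(x_0^{*}\colon\Br(C)\to\Br(E))\cong\H^1(E,\Pic(\overline{C}))\cong\H^1(E,\Jac(C))$, the last isomorphism because $[x_0]$ splits $0\to\Jac(C)(\overline{E})\to\Pic(\overline{C})\xrightarrow{\deg}\Z\to 0$ $G_E$-equivariantly and $\H^1(E,\Z)=0$.

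Suppose first that $L=k$, i.e.\ $x$ is $k$-rational. Taking $C=S$, $E=k$, $x_0=x$ above, we get $\ker(x^{*}\colon\Br(S)\to\Br(k))\cong\H^1(k,J)$. Since $g\geq 1$, $J$ is a nonzero abelian variety over $k$, and the crucial input is that $\H^1(k,J)\neq 0$ when $k$ is finitely generated over a global field. Granting this, choose $0\neq\gamma\in\H^1(k,J)$ and let $\alpha\in\Br(S)$ be the corresponding class; then $\alpha\neq 0$ (it is a nonzero element of a direct summand of $\Br(S)$) and $\alpha|_x=x^{*}\alpha=0$, as required.

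Suppose instead that $L\supsetneq k$. Now I take $\alpha$ to be pulled back from $\Spec k$. By the discussion above (with $C=S$, $E=k$), $N:=\ker(\pi_S^{*}\colon\Br(k)\to\Br(S))$ is finite, whereas $\ker(\on{res}_{L/k}\colon\Br(k)\to\Br(L))=\Br(L/k)$ is infinite: for $k$ a global field this follows from Albert--Brauer--Hasse--Noether, since $L/k$ has infinitely many places that are not completely split, and at each of them one may freely prescribe a local invariant killed by the local degree; the general case reduces to this by localizing $k$ at such a place (and in characteristic $p$, with $L/k$ inseparable, one uses in addition that $\Br(k)[p]$ is infinite). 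Pick $c\in\Br(L/k)\setminus N$ and set $\alpha:=\pi_S^{*}(c)$. Then $\alpha\neq 0$ since $c\notin N$, $\alpha$ is unramified on $S$ as it is pulled back from the base, and $\alpha|_x=x^{*}\pi_S^{*}(c)=\on{res}_{L/k}(c)=0$.

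The principal obstacle is the nonvanishing $\H^1(k,J)\neq 0$ used in the rational case. I expect to obtain it either by combining finiteness of $J(k)/\ell J(k)$ with infiniteness of $\H^1(k,J[\ell])$ for a suitable prime $\ell$, or by spreading $J$ out over a finitely generated subring of $k$, specializing to an abelian variety over a global field (where the Weil--Ch\^atelet group is infinite), and transporting a nonzero class back. The infiniteness of $\Br(L/k)$ in the second case is a secondary point relying on the same abundance of places.
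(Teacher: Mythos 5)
Your proposal follows essentially the same route as the paper's proof: split on whether $\kappa(x)=k$; in the rational case use the split Leray sequence $0\to\Br(k)\to\Br(S)\to\H^{1}(k,\Jac_{S})\to 0$ and produce a nonzero element of $\H^{1}(k,\Jac_{S})$; in the non-rational case pull back a suitable class from $\Br(\kappa(x)/k)$. The two inputs you leave as "expected" are precisely the ones the paper disposes of by citation: the nonvanishing of $\H^{1}(k,\Jac_{S})$ is a theorem of Clark--Lacy (valid for Hilbertian $k$, and your first suggested strategy --- finiteness of $J(k)/\ell J(k)$ by Lang--N\'eron against infiniteness of $\H^{1}(k,J[\ell])$ --- is essentially how such results are proved; your second strategy is shakier, since specialization maps go the wrong way for "transporting a class back"), and the infinitude of $\Br(\kappa(x)/k)$ for $k$ finitely generated over a global field is a theorem of Fein--Schacher, whose reduction to the Albert--Brauer--Hasse--Noether theorem is more delicate than your one-line "localize at a place." So these should be treated as citations rather than gaps. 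One point in your favor: your non-rational case is more careful than the paper's, since you verify that $\ker(\Br(k)\to\Br(S))$ is finite (the image of the finitely generated group $(\Pic\overline{S})^{G_k}$ under the transgression into the torsion group $\Br(k)$), which is needed to guarantee that some class of the infinite group $\Br(\kappa(x)/k)$ survives in $\Br(S)$; the paper asserts that \emph{any} nontrivial class of $\Br(\kappa(x)/k)$ works, which is not justified as stated.
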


\begin{proof} If $x$ is a $k$-point, then the Leray spectral sequence gives a split exact sequence \[ 0 \to \Br(k) \to \Br(S) \to \H^{1}(\Spec k , \Jac_{S}) \to 0 \] where $\Jac_{S}$ is the Jacobian of $S$. Since $k$ is Hilbertian, by \cite[Theorem 1.6]{CLARKLACY-TAGOCOEIOEIFGF2019} the group $\H^{1}(\Spec k , \Jac_{S})$ contains elements of order $n$ for any positive integer $n$; given any such element, we obtain a nontrivial element of $\Br(S)$ which vanishes at $x$.

If $x$ is not a $k$-point, then the extension $\kappa(x)/k$ is nontrivial; by a theorem of Fein, Schacher \cite{FEINSCHACHER-RELATIVEBRAUERGROUPS-III} the relative Brauer group $\Br(\kappa(x)/k)$ is infinite. For any nontrivial class $\alpha \in \Br(\kappa(x)/k)$, the constant class $\alpha|_{S} \in \Br(S)$ is a Brauer class which vanishes at $x$. \end{proof}

\begin{remark} In \Cref{0012}, the condition $S \not\simeq \P_{k}^{1}$ is necessary since the pullback $\Br(k) \to \Br(\P_{k}^{1})$ is an isomorphism (so if $x \in \P_{k}^{1}(k)$ is a $k$-point then there does not exist any nontrivial $\alpha \in \Br(\P_{k}^{1})$ which vanishes at $x$). \end{remark}

\section{Applications to rational points on genus 1 curves} \label{sec-04}

As an application of our results in \Cref{sec-02}, we prove \Cref{0015}, which may be viewed as a local-to-global principle for genus $1$ curves over function fields of fourfolds over $k$. We first prove a version of \cite[Lemma 3.2]{CIPERIANIKRASHEN-RBGOGOC2012} for relative elliptic curves.

\begin{lemma} \label{0014} Let $S$ be a quasi-compact scheme admitting an ample line bundle, let $\pi : \mc{E} \to S$ be a relative elliptic curve, and $\sigma : S \to \mc{E}$ denote the identity section of $\pi$. There is a natural isomorphism \[ \ker(\sigma^{\ast} : \Br(\mc{E}) \to \Br(S)) \simeq \H_{\et}^{1}(S,\mc{E})_{\mr{tors}} \] which is functorial on $S$. \end{lemma}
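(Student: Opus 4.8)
The plan is to use the Leray spectral sequence for $\pi : \mc{E} \to S$ together with the section $\sigma$, which splits off the ``constant'' part of $\Br(\mc{E})$, and then to identify the remaining piece with torsion in $\H^1_{\et}(S,\mc{E})$. First I would record the low-degree exact sequence associated to $\mathrm{E}_2^{p,q} = \H^p(S, \mathbf{R}^q\pi_\ast\G_m) \Rightarrow \H^{p+q}_{\et}(\mc{E},\G_m)$. Since $\pi$ is a relative elliptic curve (smooth proper with geometrically connected genus $1$ fibers and a section), one has $\pi_\ast\G_m = \G_m$ (fppf-locally, the fibers are geometrically integral and proper, so global units are just units on the base), and $\mathbf{R}^1\pi_\ast\G_m$ is the relative Picard functor, which because of the section splits as $\Z \oplus \mc{E}$ via the degree map, with the degree-zero part canonically identified with $\mc{E}$ itself (as a sheaf, using autoduality of the elliptic curve). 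The presence of $\sigma$ means every term $\H^p(S,\mathbf{R}^q\pi_\ast\G_m)$ is a direct summand of $\H^{p+q}_{\et}(\mc{E},\G_m)$ compatibly, so all differentials out of the relevant terms vanish and the edge maps are split injections/surjections.

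Next I would extract from this the identification of $\ker(\sigma^\ast : \Br(\mc{E}) \to \Br(S))$. The section gives $\Br(\mc{E}) = \sigma^\ast\Br(\mc{E}) \oplus \ker(\sigma^\ast)$ with $\sigma^\ast\Br(\mc{E})$ mapping isomorphically to $\pi^\ast\Br(S) \subseteq \Br(S)$; concretely $\pi^\ast$ is a split injection with retraction $\sigma^\ast$. The complementary summand $\ker(\sigma^\ast)$ is then computed by the spectral sequence: in total degree $2$, after splitting off $\H^2(S,\G_m)$ coming from the base and the $\Z$-summand contribution $\H^2(S,\Z)$ (which, by the degree filtration and the hypothesis that $S$ admits an ample line bundle — hence has enough line bundles so the relevant map is understood — does not interfere; more precisely the $\Z$-summand of $\mathbf{R}^1\pi_\ast\G_m$ contributes a copy split off by $\sigma^\ast$ as well since $\sigma$ has a well-defined notion of degree), one is left with $\H^1(S, \mc{E})$ mapping into $\ker(\sigma^\ast)$, and this map lands in and surjects onto the torsion subgroup. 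The torsion restriction comes from the fact that the image of $\H^1(S,\mc{E}) \to \Br(\mc{E})$ consists of classes pulled back along $\mc{E}$ which are killed after a finite flat base change trivializing a torsor (every element of $\H^1(S,\mc{E})$ that maps to a Brauer class is automatically torsion because the generic fiber, a genus $1$ curve, has index equal to its period up to the usual bounds — but more cleanly, the relevant $\mathrm{E}_\infty$ subquotient of a torsion group $\H^1_{\et}(S,\mc{E})$ landing in $\Br$, combined with the fact that $\mathrm{E}_3^{0,1}$-style differentials $\H^1(S,\mc{E}) \to \H^3(S,\G_m)$ have torsion target on the relevant part). I would therefore get a natural injection $\H^1_{\et}(S,\mc{E})_{\mathrm{tors}} \hookrightarrow \ker(\sigma^\ast)$ and argue surjectivity by a diagram chase in the five-term sequence, using that any class in $\ker(\sigma^\ast)$ dies on the base and hence comes from $\mathbf{R}^1\pi_\ast\G_m$-cohomology, specifically from its degree-zero summand $\mc{E}$.

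Functoriality in $S$ is then automatic: the Leray spectral sequence, the section, and all the edge maps are natural with respect to base change $S' \to S$ (pulling back $\mc{E}$, $\sigma$, and $\pi$), so the resulting isomorphism is compatible with restriction.

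The main obstacle I expect is controlling the $\Z$-summand of $\mathbf{R}^1\pi_\ast\G_m$ — that is, cleanly separating the degree part from the Jacobian part and showing the $\H^2(S,\Z)$ (equivalently $\H^1(S,\Q/\Z)$ after the exponential-type sequence, or rather $\H^1(S,\G_m)$-shifted) contribution is exactly absorbed by $\sigma^\ast$ and does not leak into $\ker(\sigma^\ast)$; this is where the hypothesis that $S$ is quasi-compact with an ample line bundle is used, to ensure the relative Picard sheaf is well-behaved and the degree map is split. The torsion statement — that the whole of $\ker(\sigma^\ast)$, not just a torsion subgroup, matches $\H^1_{\et}(S,\mc{E})_{\mathrm{tors}}$ — also requires a small argument that elements of $\H^1_{\et}(S,\mc{E})$ of infinite order (if any) do not produce Brauer classes, which follows since such a class becomes trivial in $\Br(\mc{E})$ once pulled back to a suitable cover, i.e. the map $\H^1_{\et}(S,\mc{E}) \to \Br(\mc{E})$ factors through the torsion because $\Br(\mc{E})$ is torsion.
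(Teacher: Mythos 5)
Your strategy is the same as the paper's: the Leray spectral sequence for $\pi$, the splitting induced by $\sigma$, and the decomposition $\mathbf{R}^{1}\pi_{\ast}\G_{m}\simeq \Pic_{\mc{E}/S}\simeq \mc{E}\times\underline{\Z}$. However, two steps are left genuinely open. The first is the surjectivity of $\ker(\sigma^{\ast})\to \H_{\et}^{1}(S,\mathbf{R}^{1}\pi_{\ast}\G_{m})$: your claim that any class in $\ker(\sigma^{\ast})$ ``dies on the base and hence comes from $\mathbf{R}^{1}\pi_{\ast}\G_{m}$-cohomology'' is not a formal consequence of the spectral sequence. A class in $\H_{\et}^{2}(\mc{E},\G_{m})$ first maps to $\mr{E}_{\infty}^{0,2}\subseteq \H^{0}(S,\mathbf{R}^{2}\pi_{\ast}\G_{m})$, and only the classes killed there live in the filtration step whose graded piece is $\mr{E}_{\infty}^{1,1}$. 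So you need the vanishing $\mathbf{R}^{2}\pi_{\ast}\G_{m}=0$, which is the relative form of Tsen's theorem for curves (the paper cites Gabber's thesis for this). This is the one substantive cohomological input in the whole lemma, and it is absent from your argument; without it the exact sequence $0\to\H_{\et}^{2}(S,\G_{m})\to\H_{\et}^{2}(\mc{E},\G_{m})\to\H_{\et}^{1}(S,\mathbf{R}^{1}\pi_{\ast}\G_{m})\to 0$ on which everything rests is not established.

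The second gap is your treatment of the $\underline{\Z}$-summand, which you yourself flag as the main obstacle but do not resolve. In total degree $2$ this summand contributes $\H_{\et}^{1}(S,\underline{\Z})$ (a direct summand of $\mr{E}_{2}^{1,1}$), not $\H^{2}(S,\Z)$, and it is not ``split off by $\sigma^{\ast}$'': the retraction $\sigma^{\ast}$ splits off exactly $\mr{E}_{\infty}^{2,0}=\H_{\et}^{2}(S,\G_{m})$ and nothing more. The reason this summand does not contaminate $\ker(\sigma^{\ast})$ after restricting to Brauer groups is that $\H_{\et}^{1}(S,\underline{\Z})$ is torsion-free while the Brauer group is torsion, so it disappears when one restricts the split sequence to torsion subgroups. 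You correctly sense that a torsion argument is what is needed, but the fact to invoke is torsion-freeness of $\H_{\et}^{1}(S,\underline{\Z})$, not anything about periods and indices of genus-$1$ curves. (Your appeal to the section to kill differentials is fine once made precise: the only differential that matters is $d_{2}^{1,1}:\H_{\et}^{1}(S,\mathbf{R}^{1}\pi_{\ast}\G_{m})\to\H_{\et}^{3}(S,\G_{m})$, which vanishes because $\pi^{\ast}$ is split injective on $\H^{3}$.)
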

\begin{proof} We consider the Leray spectral sequence for $\pi$. Since $\pi$ is a relative curve, we have $\mb{R}^{2}\pi_{\ast}\G_{m} = 0$ by \cite[II, Lemma 2\ensuremath{'}]{GABBER-THESIS}. Since $\pi$ admits a section, the pullback $\pi^{\ast} : \H_{\et}^{3}(S,\G_{m}) \to \H_{\et}^{3}(\mc{E},\G_{m})$ is injective, hence the differential $d_{2}^{1,1} : \H_{\et}^{1}(S,\mb{R}^{1}\pi_{\ast}\G_{m}) \to \H_{\et}^{3}(S,\G_{m})$ is the zero map. Thus we obtain an exact sequence \begin{align} \label{04-01} 0 \to \H_{\et}^{2}(S,\G_{m}) \stackrel{\pi^{\ast}}{\to} \H_{\et}^{2}(\mc{E},\G_{m}) \to \H_{\et}^{1}(S,\mb{R}^{1}\pi_{\ast}\G_{m}) \to 0 \end{align} which admits a canonical splitting induced by $\sigma^{\ast}$. We have isomorphisms \[ \mb{R}^{1}\pi_{\ast}\G_{m} \simeq \Pic_{\mc{E}/S} \simeq \Pic_{\mc{E}/S}^{0} \times \underline{\Z} \simeq \mc{E} \times \underline{\Z} \] of etale sheaves on $S$. Since $\H_{\et}^{1}(S,\underline{\Z})$ is torsion-free (see e.g. \cite[A.3]{SHIN-TCBGOATGMG}), we have a split exact sequence \[ 0 \to \Br(S) \stackrel{\pi^{\ast}}{\to} \Br(\mc{E}) \to \H_{\et}^{1}(S,\mc{E})_{\mr{tors}} \to 0 \] obtained by restricting to the torsion subgroups in \labelcref{04-01}. \end{proof}

\begin{pg}[{Proof of \Cref{0015}}] By replacing $S$ with an open subscheme, we may assume that $S$ is affine and regular; furthermore, we may assume that $C$ has good reduction over $S$, i.e. that there exists a scheme $\mc{C}$ and a smooth projective morphism $\pi : \mc{C} \to S$ whose geometric fibers are connected curves of genus 1 and such that there is a $K$-isomorphism $\pi^{-1}(\eta_{S}) \simeq C$, where $\eta_{S} \in S$ denotes the generic point. Let $\mc{E} := \Pic_{\mc{C}/S}^{0}$ be the Jacobian of $\mc{C}$. We have that $\mc{C}$ is an $\mc{E}$-torsor, i.e. there is a class $[\mc{C}] \in \H_{\et}^{1}(S,\mc{E})$ which is the obstruction to the existence of a section of $\pi$. Let \[ \alpha \in \Br(\mc{E}) \] be the Brauer class corresponding to $[\mc{C}]$ under the isomorphism in \Cref{0014}.

We will show that $\alpha|_{x} = 0$ for all closed points $x \in \pi^{-1}(\eta_{S})$; if so, then $\alpha|_{\pi^{-1}(\eta_{S})} = 0$ by \Cref{0004} applied to $\pi^{-1}(\eta_{S})$. Let $\til S := \overline{\{x\}}$ be the closure of $x$ in $\mc{E}$, equipped with the reduced scheme structure. As the composition $\til S \to \mc{E} \to S$ is generically finite and dominant, it follows that $\til S$ is itself an integral finite type $F$-scheme of dimension $\dim \til S \ge 1$. By restricting $S$ again to a smaller Zariski open set, we can assume that $\til S \to S$ is finite flat and that $\til S$ is smooth over $F$. To show that $\alpha|_{x} = 0$, by \Cref{0004} applied to $\til S$, it suffices to show that $\alpha|_{\til s} = 0$ for every codimension $1$ point $\til s \in \til S$. Choose such a point $\til s \in \til S$ and consider its image $s \in S$, which is also codimension $1$ since $\til S \to S$ is finite flat. By hypothesis, the restriction $[\mc{C} \times_{S} \Spec K_{s}] \in \H_{\et}^{1}(\Spec K_{s}, \mc{E} \times_{S} \Spec K_{s})$ is trivial, so $\alpha|_{\mc{E} \times_{S} \Spec K_{s}} = 0$ by \Cref{0014}. Since $\mc{E} \times_{S} \Spec \mc{O}_{S,s}^{\wedge}$ is regular, we have $\alpha|_{\mc{E} \times_{S} \Spec \mc{O}_{S,s}^{\wedge}} = 0$, thus $\alpha|_{\mc{E} \times_{S} \Spec \kappa(s)} = 0$, in particular $\alpha|_{\til s} = 0$. \end{pg}

\bibliography{main.bib}

\begin{thebibliography}{FHVA21}

\bibitem[Alp13]{ALPER-GMSFAS}
Jarod Alper.
\newblock {Good moduli spaces for Artin stacks}.
\newblock {\em Ann. Inst. Fourier (Grenoble)}, 63(6):2349--2402, 2013.

\bibitem[CK12]{CIPERIANIKRASHEN-RBGOGOC2012}
Mirela Ciperiani and Daniel Krashen.
\newblock {Relative Brauer groups of genus 1 curves}.
\newblock {\em Israel Journal of Mathematics}, 192(2):921--949, 2012.

\bibitem[CL19]{CLARKLACY-TAGOCOEIOEIFGF2019}
Pete~L Clark and Allan Lacy.
\newblock There are genus one curves of every index over every infinite,
  finitely generated field.
\newblock {\em Journal f{\"u}r die reine und angewandte Mathematik (Crelles
  Journal)}, 2019(749):65--86, 2019.

\bibitem[FHVA21]{FREIHASSETTVARILLYALVARADO-ROBCOK3SRADE2021}
Sarah Frei, Brendan Hassett, and Anthony V{\'a}rilly-Alvarado.
\newblock {Reduction of Brauer classes on K3 surfaces, rationality and derived
  equivalence}.
\newblock {\em arXiv preprint arXiv:2111.08668}, 2021.

\bibitem[FJ08]{FRIED-JARDEN-FIELDARITHMETIC}
M.~Fried and M.~Jarden.
\newblock {\em {Field Arithmetic}}, volume~11 of {\em Ergebnisse der Mathematik
  und ihrer Grenzgebiete. 3. Folge [Results in Mathematics and Related Areas.
  3rd Series.] A Series of Modern Surveys in Mathematics}.
\newblock Springer-Verlag, 2008.

\bibitem[FS82]{FEINSCHACHER-RELATIVEBRAUERGROUPS-III}
Burton Fein and Murray Schacher.
\newblock {Relative Brauer groups. III.}
\newblock {\em Journal für die reine und angewandte Mathematik}, 335:37--86,
  1982.

\bibitem[FS00]{FEINSCHACHER-SOBCOAFF2000}
Burton Fein and Murray Schacher.
\newblock {Specializations of Brauer classes over algebraic function fields}.
\newblock {\em Transactions of the American Mathematical Society},
  352(9):4355--4369, 2000.

\bibitem[FSS92]{FEINSCHACHERSALTMAN-BHF1992}
Burton Fein, David~J Saltman, and Murray Schacher.
\newblock {Brauer-Hilbertian fields}.
\newblock {\em Transactions of the American Mathematical Society},
  334(2):915--928, 1992.

\bibitem[Fuj02]{FUJIWARA-APOTAPC}
Kazuhiro Fujiwara.
\newblock {A proof of the absolute purity conjecture (after Gabber)}.
\newblock {\em Adv. Stud. Pure Math}, 36:153--183, 2002.

\bibitem[Gab78]{GABBER-THESIS}
Ofer Gabber.
\newblock {\em {Some theorems on Azumaya algebras}}.
\newblock PhD thesis, Harvard, 1978.
\newblock published in Groupes de Brauer, Lecture Notes in Math., vol. 844,
  Springer-Verlag, Berlin and New York, 1981, pp. 129-209.

\bibitem[GH85]{GRIFFITHSHARRIS-OTNLTASROCTC1985}
Phillip Griffiths and Joe Harris.
\newblock {On the Noether-Lefschetz theorem and some remarks on codimension-two
  cycles}.
\newblock {\em Mathematische Annalen}, 271(1):31--51, 1985.

\bibitem[Gro05]{SGA2new}
Alexander Grothendieck.
\newblock {\em Cohomologie locale des faisceaux coh\'erents et th\'eor\`emes de
  Lefschetz locaux et globaux (SGA 2)}, volume~4 of {\em Documents
  Math\'ematiques (Paris) [Mathematical Documents (Paris)]}.
\newblock Soci\'et\'e Math\'ematique de France, Paris, 2005.
\newblock S\'eminaire de G\'eom\'etrie Alg\'ebrique du Bois Marie, 1962;
  Augment\'e d'un expos\'e de Mich\`ele Raynaud. [With an expos\'e by Mich\`ele
  Raynaud]; With a preface and edited by Yves Laszlo; Revised reprint of the
  1968 French original.

\bibitem[GS06]{GILLE-SZAMUELY}
Phillipe Gille and Tam\'as Szamuely.
\newblock {\em {Central Simple Algebras and Galois Cohomology}}, volume 101 of
  {\em Cambridge Studies in Advanced Mathematics}.
\newblock Cambridge University Press, 2006.

\bibitem[Har70]{HARTSHORNE-ASOAV1970}
Robin Hartshorne.
\newblock {\em {Ample Subvarieties of Algebraic Varieties}}, volume 156 of {\em
  Lecture Notes in Mathematics}.
\newblock Springer, 1970.

\bibitem[Har77]{HARTSHORNE}
Robin Hartshorne.
\newblock {\em {Algebraic Geometry}}, volume~52 of {\em Grad. Texts in Math.}
\newblock Springer, 1977.

\bibitem[HS06]{HUNEKESWANSON-ICOIRAM2006}
Craig Huneke and Irena Swanson.
\newblock {\em Integral closure of ideals, rings, and modules}, volume~13.
\newblock Cambridge University Press, 2006.

\bibitem[Ill09]{ILLUSIE-OGRU2009}
Luc Illusie.
\newblock {On Gabber's refined uniformization}, 2009.
\newblock Lectures at the University of Tokyo,
  \url{https://www.imo.universite-paris-saclay.fr/~illusie/refined_uniformization3.pdf}.

\bibitem[KV04]{KV2004}
Andrew Kresch and Angelo Vistoli.
\newblock {On coverings of Deligne--Mumford stacks and surjectivity of the
  Brauer map}.
\newblock {\em Bulletin of the London Mathematical Society}, 36(02):188--192,
  2004.

\bibitem[Lan62]{LANG-DG1962}
Serge Lang.
\newblock {\em {Diophantine Geometry}}.
\newblock Interscience Publishers, New York, 1962.

\bibitem[Lie11]{LIEBLICH-PAIITBGOAAS}
Max Lieblich.
\newblock {Period and index in the Brauer group of an arithmetic surface}.
\newblock {\em Journal f{\"u}r die reine und angewandte Mathematik (Crelles
  Journal)}, 2011(659):1--41, 2011.

\bibitem[Mes94a]{MESTRE-APCDVDEDBR2KXAHPARC1994}
J-F Mestre.
\newblock {Annulation, par changement de variable, d'{\'e}l{\'e}ments de
  $\Br_{2} (k (x))$ ayant huit p{\^o}les, {\`a} r{\'e}sidu constant}.
\newblock {\em Comptes rendus de l'Acad{\'e}mie des sciences. S{\'e}rie 1,
  Math{\'e}matique}, 319(11):1147--1149, 1994.

\bibitem[Mes94b]{MESTRE-APCDVDEDBR2KXAQP1994}
J-F Mestre.
\newblock {Annulation, par changement de variable, d'{\'e}l{\'e}ments de
  $\Br_{2} (k (x))$ ayant quatre p{\^o}les}.
\newblock {\em Comptes rendus de l'Acad{\'e}mie des sciences. S{\'e}rie 1,
  Math{\'e}matique}, 319(6):529--532, 1994.

\bibitem[Mes96a]{MESTRE-APCDVDEDBR2KXACP1996}
J-F Mestre.
\newblock {Annulation, par changement de variable, d'{\'e}l{\'e}ments de
  $\Br_{2} (k (x))$ ayant cinq p{\^o}les}.
\newblock {\em Comptes rendus de l'Acad{\'e}mie des sciences. S{\'e}rie 1,
  Math{\'e}matique}, 322(6):503--505, 1996.

\bibitem[Mes96b]{MESTRE-APCDVDEDBR2KXOKEUCF1996}
J-F Mestre.
\newblock {Annulation, par changement de variable, d'{\'e}l{\'e}ments de
  $\Br_{2} (k (x))$, o{\`u} k est un corps fini}.
\newblock {\em Comptes rendus de l'Acad{\'e}mie des sciences. S{\'e}rie 1,
  Math{\'e}matique}, 322(5):423--426, 1996.

\bibitem[Pie82]{PIERCE-AA1982}
Richard~S. Pierce.
\newblock {\em {Associative Algebras}}, volume~88 of {\em Graduate Texts in
  Mathematics}.
\newblock Springer-Verlag, New York Heidelberg Berlin, 1982.

\bibitem[Ser02]{SERRE-GALOISCOHOMOLOGY}
J.~P. Serre.
\newblock {\em {Galois Cohomology}}.
\newblock Springer Monographs in Mathematics. Springer-Verlag, Berlin, 2002.
\newblock Translated from French by Patrick Ion and revised by the author.

\bibitem[Ser08]{SERRE-TIGT2008}
J.~P. Serre.
\newblock {\em {Topics in Galois Theory}}, volume~1 of {\em Research Notes in
  Mathematics}.
\newblock Springer, 2 edition, 2008.

\bibitem[Shi21]{SHIN-TCBGOATGMG}
Minseon Shin.
\newblock {The cohomological Brauer group of a torsion $\G_{m}$-gerbe}.
\newblock {\em International Mathematics Research Notices},
  2021(19):14480--14507, 2021.

\bibitem[{Sta}22]{SP}
The {Stacks Project Authors}.
\newblock \textit{Stacks Project}.
\newblock \url{http://stacks.math.columbia.edu}, 2022.

\bibitem[Tem17]{TEMKIN-TDADBPA2017}
Michael Temkin.
\newblock {Tame distillation and desingularization by $p$-alterations}.
\newblock {\em Annals of Mathematics}, 186(1):97--126, 2017.

\bibitem[Yan85]{YANCHEVSKII-KUOCBASFOSCA1985}
Vyacheslav~I. Yanchevskii.
\newblock {K-unirationality of conic bundles and splitting fields of simple
  central algebras}.
\newblock {\em Doklady Akademii Nauk Belarusi}, 29(12):1061--1064, 1985.

\end{thebibliography}
\bibliographystyle{alpha}

\end{document}